\newtheorem{theorem}{Theorem}[section]
\newtheorem{definition}[theorem]{Definition}
\newtheorem{lemma}[theorem]{Lemma}
\newtheorem{remark}[theorem]{Remark}
\newtheorem{corollary}[theorem]{Corollary}
\newtheorem{proposition}[theorem]{Proposition}
\def\leftnote#1{\vadjust{\setbox1=\vtop{\hsize 20mm\parindent=0pt\bf\baselineskip=9pt\rightskip=4mm plus 4mm#1}\hbox{\kern-2cm\smash{\box1}}}}
\newenvironment{proof}[1][Proof]{\par\addvspace{2mm}\noindent\textbf{#1.} }{\ \rule{0.5em}{0.5em}\par\vspace{4mm}}
\newcommand{\Q}{\mathbb{Q}}
\newcommand{\Z}{\mathbb{Z}}
\newcommand{\F}{\mathbb{F}}
\newcommand{\PP}{\mathbb{P}}
\newcommand{\NN}{\mathbb{N}}
\newcommand\e{\hbox{\rm e}}
\newcommand{\R}{\mathcal{R}}
\newcommand{\A}{{\mathcal A}}
\newcommand{\bo}{{\mathfrak O}}
\newcommand{\D}{{\mathfrak D}}
\newcommand{\bp}{{\mathfrak P}}
\newcommand{\wb}{\overline}
\newcommand{\wt}{\widetilde}
\newcommand{\dd}{\partial}
\DeclareMathOperator{\Gal}{Gal}
\begin{document}
 \bibliographystyle{plain}  
 \title{Exponential power series, Galois module structure and differential modules}
 \author{Erik Jarl Pickett and St\'ephane Vinatier}
 \maketitle
 
\begin{abstract}
We use new over-convergent $p$-adic exponential power series, inspired
by work of Pulita \cite{Pulita}, to build self-dual normal basis
generators for the square root of the inverse different of certain
abelian weakly ramified extensions of an unramified extension $K$ of
$\Q_p$. 
These extensions, whose set we denote by $\mathcal M$, are the degree 
$p$ subextensions over $K$ of $M_{p,2}$, the maximal
abelian totally, wildly and weakly ramified extension of $K$, whose
norm group contains $p$. 
Our construction follows Pickett's \cite{Pickett}, who 
dealt with the same set $\mathcal M$ of extensions of $K$, but does
not depend on the choice of a basis of the residue field $k$ of $K$. 
Instead it furnishes a one-to-one correspondence, commuting with the
action of the Galois group of $K/\Q_p$, from the projective
space of $k$ onto $\mathcal M$. We describe very precisely the norm
group of the extensions in $\mathcal M$. When $K\not=\Q_p$, their
compositum $M_{p,2}$ yields an interesting example of non abelian weakly
ramified extension of $\Q_p$, with Galois group isomorphic to a wreath
product.  
Finally we show that, with a slight modification, our over-convergent
exponential power series endow certain differential modules with a
Frobenius structure, generalising a result of Pulita. Unfortunately,
they then lose the property we need to build self-dual normal basis
generators, hence the desirable link between Galois module structure
and differential modules is not yet obtained.
\end{abstract}





\section*{Introduction}

Exponential power series over $p$-adic rings are very useful objects
in a number of different fields. They have recently been used in the
description of rank one $p$-adic differential equations over the Robba
ring of a $p$-adic field \cite{Pulita} and to obtain results
concerning integral Galois module structure in wildly ramified
extensions of both local and global fields
\cite{Pickett,PickettVinatier}. In this paper we generalise and modify
some previous constructions of exponential power series and explore
their applications. 

Let $E/F$ be a finite odd degree Galois extension of number fields,
with Galois group $G$ and rings of integers $\bo_E$ and $\bo_F$. From
Hilbert's formula for the valuation of the different $\D_{E/F}$, see
\cite[IV \S2 Prop.4]{serre}, we know that $\D_{E/F}$ will have an even
valuation at every prime ideal of $\bo_E$ and we can thus define the
square-root of the inverse different $\A_{E/F}$ to be the unique
fractional $\bo_E$-ideal such that  
$$\A_{E/F}^2=\D_{E/F}^{-1}\enspace.$$
In \cite{erez2}, Erez proved that $\A_{E/F}$ is locally free over
$\bo_F[G]$ if and only if $E/F$ is at most weakly ramified,
\textit{i.e.}, when the second ramification groups are trivial at
every prime. This gives $\A_{E/F}$ the uncommon property that it is
locally free in certain wildly ramified extensions and as such, the
question of describing whether it is free over the group ring $\Z[G]$,
as Taylor famously achieved for the ring of integers of a tamely
ramified extension \cite{tay1}, raises new difficulties. Despite
partial results, see
\cite{erez2,PickettVinatier,Vinatier_jnumb,Vinatier-Surla,Vinatier-3},
this question remains open. 

We now let $p$ be a rational prime and let $\gamma$ be a root of the
polynomial $X^{p-1}+p$ in a fixed algebraic closure $\bar{\Q}_p$ of
the field of $p$-adic numbers $\Q_p$. Dwork's $p$-adic exponential
power series is defined as  
$$E_{\gamma}(X)=\exp(\gamma X-\gamma X^p)\enspace.$$
This power series was originally introduced by Dwork
in his study of the zeta function of hypersurfaces \cite{Dwork}. The convergence
properties of this power series can be used to endow a certain
$p$-adic differential module with a so-called Frobenius structure, see
Section \ref{Exp_section}, and such modules have become central
objects in the subjects of $p$-adic differential equations and
$p$-adic arithmetic geometry.

In \cite{Pickett} Pickett demonstrated how special values of Dwork's
power series can be used to construct arithmetic Galois module
generators in certain wildly ramified extensions of local fields
contained in Lubin-Tate division fields. In \cite{PickettVinatier}
Pickett and Vinatier use these constructions to make further progress
with the conjecture that $\A_{E/F}$ is free over $\Z[G]$  when $E/F$
is weakly ramified and of odd degree.  

This new approach marks the first progress to this goal for quite some
time; we now ask whether further progress can be made using other
techniques arising in the study of $p$-adic differential equations and
whether any insight can be gained in $p$-adic analysis with this new
perspective. We note that with the use of Fr\"ohlich's Hom-description
of the locally free class group, see \cite{Frohlich-Alg_numb}, much of
the work required for results in the global theory can be carried out
at a local level. We focus here on the local behaviour with the view
that a complete understanding of this will lead to results at a global
level. 

The main inspiration for this paper has come from recent work of
Pulita \cite{Pulita}, where he generalises Dwork's power series to a
class of exponentials, each with coefficients in a Lubin-Tate
extension of $\Q_p$: Let $f(X)\in\Z_p[X]$ be some Lubin-Tate
polynomial with respect to the uniformising parameter $p$,
\textit{i.e.},
$$f(X)\equiv X^p\mod p\Z_p[X] \text{\ \ \ and\ \ \ } f(X)\equiv pX\mod
X^2\Z_p[X]\enspace.$$  
Let $\{\omega_i\}_{i\ge0}$ be a \textit{coherent set of roots}
associated to $f(X)$, namely a sequence of elements of $\bar\Q_p$ such
that $f(\omega_i)=\omega_{i-1}$ and $\omega_0=0\ne\omega_1$. Then
define the $n$th Pulita exponential as 
$$E_n(X)=\exp\left(\sum_{i=0}^{n-1}\frac{\omega_{n-i}(X^{p^i}-X^{p^{i+1}})}{p^{i}}\right)\enspace.$$ 
These power series endow certain $p$-adic differential modules
with Frobenius structures and Pulita uses these modules to categorise
all rank one solvable $p$-adic differential equations. As Pulita
observes, restricting to the uniformising parameter $p$ means that
only Lubin-Tate division fields arising from a formal group isomorphic
to the multiplicative formal group are considered, see
\cite[\S3]{serre-lubintate} for an overview of the main theorems in
Lubin-Tate theory.

This paper is divided into five sections. In Section
\ref{sec:lubin-tate}, we set up the general framework. Assume $p$ is
an odd rational prime and $K$ is an unramified extension of $\Q_p$
with residue field $k$ of cardinality $q=p^d$. A finite
extension of $K$ is said to be \textit{weakly ramified} if it is
Galois and has a trivial second ramification group (in the lower
numbering). We first recall some properties of Galois groups of
Lubin-Tate division fields over $K$. Then, given a uniformising
parameter $\pi$ of $K$, we identify the maximal abelian totally,
wildly and weakly ramified extension of $K$, such that $\pi$ is a norm
from this extension, as the largest $p$-extension $M_{\pi,2}$
contained in the second Lubin-Tate division field $K_{\pi,2}$ relative
to $\pi$.

In Section \ref{Gal_mod_section} we describe how values of
modifications of Pulita's exponentials can be used to construct Galois 
module generators in the degree $p$ subextensions of $M_{p,2}/K$. 
Let $\mu_{q-1}=\{\mu\in\bar{\Q}_p:\mu^{q-1}=1\}$, then for
each $u\in\mu_{q-1}$, let $f_u(X)=X^p+upX$ and let $\omega_{u}$ be a
root of $f_u(X)-\gamma$ (recall that $\gamma$ is a root of
$f_1(X)/X$). We prove the over-convergence (see Notations and
Conventions for a definition) of the power series 
$$\mathcal{E}_{u,2}(X)=\exp\left(\omega_uX-u\omega_uX^p+\frac{\gamma
  X^p-\gamma X^{p^2}}{p}\right)\enspace.$$ 
We then show that if $v\in\mu_{q-1}$ and $u=v^{1-p}$, then
$\mathcal{E}_{u,2}(v)$ generates a Kummer extension $L_u$ over
$K(\gamma)=K(\zeta_p)$ (where $\zeta_p$ is any primitive $p$th root of 
unity), such that $L_u/K$ is cyclic of degree
$p(p-1)$. We show that its only subextension $M_u$ of degree $p$ over
$K$ is contained in $M_{p,2}$, hence weakly ramified, and that
$$\alpha_u=\frac{\sum_{s\in S}\mathcal{E}_{u,2}(v)^s}{p}$$
is a self-dual integral normal basis generator for $\A_{M_u/K}$, the
square-root of the inverse different of $M_u/K$, where
$S=\mu_{p-1}\cup\{0\}$. These basis generators are more natural than
the ones in \cite{Pickett} as they do not rely on a fixed basis of $k$
over $\F_p$.  
 
In Section \ref{class_field_section} we explicitly describe the norm
group of $M_u/K$, namely 
$$N(M_u/K)=\langle\,p\,\rangle\times\mu_{q-1}\times\exp(pv^{-p}Z)\enspace,$$ 
where $Z=\{x\in\bo_K:Tr_{K/\Q_p}(x)\in p\Z_p\}$ ($\bo_K$ is the
valuation ring of $K$). It follows that the map $v\mapsto M_u$ (recall
that $u=v^{1-p}$) is,
after natural identifications, a one-to-one correspondence between the
projective space $\PP(k)$ of the residue field $k$ and the set
$\mathcal M$ of subextensions of $M_{p,2}/K$ of degree $p$.
 
In Section \ref{non-ab_weak_section} we study $M_{p,2}$ as an absolute
extension of $\Q_p$. This extension is easily checked to be Galois;
when $[K:\Q_p]>1$, it yields an interesting example of a non-abelian
weakly ramified extension. We give a complete description of its
Galois group, showing that it is isomorphic to the regular wreath
product of a cyclic group of $p$ elements with a cyclic group of $d$
elements, $C_p  \wr C_d$. We also study the action of $\Gal(K/\Q_p)$
on some subextensions of $K_{p,2}/K$ and, as a corollary, we get that
the one-to-one correspondence of Section \ref{class_field_section}
commutes with the action of $\Gal(K/\Q_p)$. 
  
Our study of exponential power series from the view point of Galois
module structure has lead to the proof of the over-convergence of
power series that generalise the original constructions of
Pulita. Namely, in Section \ref{Exp_section} we prove the
over-convergence of the power series 
$$E_{u,n}(X)=\exp\left(\sum_{i=0}^{n-1}\frac{\omega_{u,n-i}(X^{p^i}-uX^{p^{i+1}})}{p^i}\right)$$
for all $u\in\Z_p^{\times}$ and $n\in\mathbb{N}$, where
$\{\omega_{u,i}\}_{i\ge0}$ is a coherent set of roots associated to a
Lubin-Tate polynomial with respect to the uniformising parameter
$up$. This removes Pulita's restriction that the Lubin-Tate formal
group be isomorphic to the multiplicative formal group. We then show
that these exponentials again furnish certain $p$-adic differential
modules with Frobenius structures.  

We remark that as differential modules with Frobenius structure and
modules with integral Galois structure are such important objects in
different subject areas, it is very desirable to obtain a direct link
between them. Unfortunately, despite using similar power series to
study these two structures, we can not provide a satisfactory link
with the results in this paper. We hope that future work will explore
this potential connection further. 


\section*{Notation and conventions}
 
We begin by fixing some notation and conventions which we will use
throughout this paper. 
\begin{itemize}
\item The set $\mathbb{N}$ is equal to the set of positive integers
  $\{1,2,3,\ldots\}$. For $n\in\mathbb{N}$ we denote the cyclic group
  of order $n$ by $C_n$.
\item Let $p$ denote an odd prime and let $\bar{\Q}_p$ be a fixed
  algebraic closure of $\Q_p$. For any $n\in\mathbb{N}$ we denote by
  $\mu_n$ the group of $n$th roots of unity contained in
  $\bar{\Q}_p^{\times}$.  
\item We let $|\cdot|_p$ be the normalised absolute value on
  $\bar{\Q}_p$ such that $|p|_p=p^{-1}$ and denote by $\mathbb{C}_p$
  the completion of $\bar{\Q}_p$ with respect to $|\cdot|_p$. We say
  that a power series with coefficients in some algebraic extension of
  $\Q_p$ is \textit{over-convergent} if it converges with respect to
  $|\cdot|_p$ on some disc $\{x\in\bar{\Q}_p:|x|_p< 1+\epsilon\}$ for
  some $\epsilon>0$. We note that an over-convergent function is
  convergent on the unit disc $\{x\in\bar{\Q}_p:|x|_p\le 1\}$. 
  \item For any extension $L/\Q_p$ considered we will always assume $L$
   is contained in $\bar{\Q}_p$ and we will denote by $\bo_L$, $\bp_L$
   and $k_L$ its valuation ring, valuation ideal and residue field
   respectively; we identify $k_{\Q_p}=\Z_p/p\Z_p$ with the field of
   $p$ elements, $\F_p$. If $\pi$ is a uniformising parameter of
   $\bo_L$, we denote by $L_{\pi,n}$ the $n$th Lubin-Tate division
   field with respect to $\pi$, see \cite[\S3.6]{serre-lubintate}, and
   we set $L_\pi=\bigcup\limits_{n\in\NN}L_{\pi,n}$.
\item Throughout we let $K/\Q_p$ be an unramified extension of degree
  $d\in\mathbb N$, we let $k_K=k$ and $q=p^d=|k|$. We know that
  $k^{\times}\cong\mu_{q-1}\subset K$ and using the Teichm\"uller
  lifting we will sometimes think of $k$ as lying in $\bo_K$ by
  identifying it with $\mu_{q-1}\cup\{0\}$. We let $\Sigma$ be the
  Galois group of $K/\Q_p$; $\Sigma$ is cyclic and generated by the
  unique lifting $\sigma$ of the Frobenius automorphism of the residue
  field extension. 
\item We denote by $K^{ab}$ the maximal abelian extension of $K$
  contained in $\bar{\Q}_p$ and let $\theta_K:K^{\times}\rightarrow
  \Gal(K^{ab}/K)$ be the Artin reciprocity map. For any extension
  $L/K$ with $L\subseteq K^{ab}$ we let $\theta_{L/K}$ be the
  composition of the Artin map with restriction of automorphisms to
  $L$; we denote by $N_{L/K}$ the norm map from $L$ to $K$ and by
  $N(L/K)$ the norm group $N(L/K)=N_{L/K}(L^\times)$.  
\end{itemize}


\section{Galois groups in Lubin-Tate towers}\label{sec:lubin-tate}

Let $\pi$ denote a uniformising parameter of $K$. Recall
that for $n\in{\mathbb N}$, the $n$th Lubin-Tate division field $K_{\pi,n}$ is
an abelian totally ramified extension of $K$, of norm group
$$N(K_{\pi,n}/K)=\langle\pi\rangle\times(1+\bp_K^n)$$ and
Galois group $\Gal(K_{\pi,n}/K)\cong
K^\times/N(K_{\pi,n}/K)\cong\mu_{q-1}\times(1+\bp_K)/(1+\bp_K^n)$. If
further $m\le n$ in $\mathbb N$, then $K\subset K_{\pi,m}\subseteq
K_{\pi,n}$ and 
\begin{equation}\label{galois-subgroup}
\Gal(K_{\pi,n}/K_{\pi,m})\cong1+\bp_K^m/1+\bp_K^{n}\enspace.
\end{equation}
See \cite{Iwasawa} for details, in particular Proposition 7.3 and
Lemma 7.4 (note that $K_{\pi,n}$ is denoted there by $K_\pi^{n-1}$).  
\begin{proposition}\label{GaloisGroups}
Let $n,m\in\mathbb{N}$ with $m\le n$, then
$$\Gal(K_{\pi,n}/K_{\pi,m})\cong\bo_K/p^{n-m}\bo_K\cong \bigoplus_{i=1}^d\big(\Z/{p^{n-m}}\Z\big)\enspace.$$
Further, any subextension of $K_\pi/K_{\pi,1}$ with Galois group of
exponent $p^{m-1}$ is contained in $K_{\pi,m}$.
\end{proposition}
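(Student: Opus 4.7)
The plan is to combine the known description \eqref{galois-subgroup} of $\Gal(K_{\pi,n}/K_{\pi,m})$ as a quotient of principal units with the $p$-adic logarithm, which turns multiplicative groups into additive ones. Since $K/\Q_p$ is unramified we have $\bp_K=p\bo_K$, so that \eqref{galois-subgroup} reads
$$\Gal(K_{\pi,n}/K_{\pi,m})\cong (1+p^m\bo_K)/(1+p^n\bo_K)\enspace.$$
I would first recall that for $p$ odd and $K/\Q_p$ unramified, the series $\log$ and $\exp$ converge on $1+p\bo_K$ and $p\bo_K$ respectively (the relevant convergence condition $v(x)>1/(p-1)$ is satisfied as soon as $v(x)\ge 1$, because $p$ is odd), and define mutually inverse isomorphisms of topological $\Z_p$-modules between them. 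Restricting, they give isomorphisms $1+p^j\bo_K\cong p^j\bo_K$ for every $j\ge 1$, and hence
$$(1+p^m\bo_K)/(1+p^n\bo_K)\ \stackrel{\log}{\cong}\ p^m\bo_K/p^n\bo_K\ \stackrel{\cdot p^{-m}}{\cong}\ \bo_K/p^{n-m}\bo_K\enspace.$$
Because $K/\Q_p$ is unramified of degree $d$, the ring $\bo_K$ is free of rank $d$ over $\Z_p$, so the final quotient is isomorphic to $\bigoplus_{i=1}^{d}\Z/p^{n-m}\Z$, proving the first assertion.

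For the second assertion, I would pass to the inverse limit, obtaining
$$\Gal(K_\pi/K_{\pi,1})\cong\varprojlim_{n}(1+p\bo_K)/(1+p^n\bo_K)\cong 1+p\bo_K\enspace,$$
which, via $\log$, is isomorphic to $p\bo_K$, and hence to $\bo_K$ as a $\Z_p$-module (free of rank $d$). Now let $L$ be a subextension of $K_\pi/K_{\pi,1}$ whose Galois group over $K_{\pi,1}$ has exponent dividing $p^{m-1}$, and set $H=\Gal(K_\pi/L)$. The hypothesis means that $\Gal(K_\pi/K_{\pi,1})^{p^{m-1}}\subseteq H$. But under the logarithm, taking $p^{m-1}$-th powers in $1+p\bo_K$ corresponds to multiplication by $p^{m-1}$ on $p\bo_K$, so
$$\Gal(K_\pi/K_{\pi,1})^{p^{m-1}}\cong p^{m-1}\cdot p\bo_K=p^m\bo_K\cong 1+p^m\bo_K=\Gal(K_\pi/K_{\pi,m})\enspace.$$
Therefore $\Gal(K_\pi/K_{\pi,m})\subseteq H=\Gal(K_\pi/L)$, which by Galois theory gives $L\subseteq K_{\pi,m}$.

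The only genuine subtlety is verifying that $\log$ already induces an isomorphism at the bottom level $1+p\bo_K\to p\bo_K$ (and not merely on $1+p^2\bo_K$), since on this hinges the equivalence between exponent conditions on Galois groups and index conditions in the Lubin-Tate tower; this is exactly where the unramifiedness of $K/\Q_p$ combined with $p$ odd is used. Once this is in hand, both statements are essentially the transfer of multiplicative information to additive information and a free rank count.
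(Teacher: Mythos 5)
Your proof is correct and follows essentially the same route as the paper: both use the $\log$/$\exp$ isomorphism between $1+\bp_K^j$ and $\bp_K^j=p^j\bo_K$ (valid because $p$ is odd and $K/\Q_p$ is unramified) to turn the quotient of principal units coming from Lubin--Tate theory into $\bo_K/p^{n-m}\bo_K$, then use freeness of $\bo_K$ over $\Z_p$. The only divergence is in the second assertion, where you pass to the inverse limit $\Gal(K_\pi/K_{\pi,1})\cong 1+p\bo_K$ and identify $G^{p^{m-1}}$ with $\Gal(K_\pi/K_{\pi,m})$ directly via $\log$, whereas the paper first places $L$ inside a finite level $K_{\pi,n}$ and obtains the same identification by an index count; both arguments are valid.
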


\begin{proof}
Since $K/\Q_p$ is unramified and $p\ge 3$, we have
$\bp_K\subseteq\{x\in\mathbb C_p:|x|_p<p^{-\frac{1}{p-1}}\}$. Using
the Proposition in \cite[IV.1]{Koblitz} and the fact that $K$ is
complete, the exponential power series gives a group isomorphism from
the additive group $\bp_K$ to the multiplicative group $1+\bp_K$, with
inverse map the logarithmic power series. For any $i\in\mathbb{N}$,
this isomorphism yields 
$$\exp(\bp_K^i)=\exp(p^{i-1}\bp_K)=\exp(\bp_K)^{p^{i-1}}\cong(1+\bp_K)^{p^{i-1}}\enspace.$$ 
By \cite[XIV Proposition 9]{serre} we know that
$(1+\bp_K)^{p^{i-1}}\cong 1+\bp_K^i$. Combining these results we have
$\bp_K^i\cong1+\bp_K^i$ and thus, using (\ref{galois-subgroup}):
$$\Gal(K_{\pi,n}/K_{\pi,m})\cong
\bp_K^m/\bp_K^{n}=p^m\bo_K/p^{n}\bo_K\cong\bo_K/p^{n-m}\bo_K\enspace.$$ 
The first result now follows using a $\Z_p$-basis of $\bo_K$. Note
that $K_{\pi,n}/K_{\pi,1}$ has Galois group of exponent $p^{n-1}$. 

Suppose $L$ is a subextension of $K_\pi/K_{\pi,1}$ with Galois group
of exponent $p^{m-1}$, then $L\subseteq K_{\pi,n}$ for some
integer $n$, with $n\ge m$ by the previous result, namely
$K_{\pi,m}\subseteq K_{\pi,n}$. Any subgroup $H$ of
$G=\Gal(K_{\pi,n}/K_{\pi,1})$ such that $G/H$ is of exponent $p^{m-1}$
contains $G^{p^{m-1}}=\{g^{p^{m-1}}:g\in G\}$, which is of index
$p^{(m-1)d}=[K_{\pi,m}:K_{\pi,1}]$ in $G$, so that: 
$$\Gal(K_{\pi,n}/K_{\pi,m})=\Gal(K_{\pi,n}/K_{\pi,1})^{p^{m-1}}\enspace,$$
and $\Gal(K_{\pi,n}/K_{\pi,m})\subseteq\Gal(K_{\pi,n}/L)$,
hence $L\subseteq K_{\pi,m}$ as required.
\end{proof}
It follows from the proof that
$(1+\bp_K)/(1+\bp_K^n)\cong(C_{p^{n-1}})^d$ has order 
$q^{n-1}$, thus $\Gal(K_{\pi,n}/K)$ is the direct product of
$\mu_{q-1}$, which is cyclic of order prime to $p$, by an abelian
$p$-group.
\begin{definition} 
We define $M_{\pi,n}$ to be the unique subextension of $K_{\pi,n}/K$
such that $[M_{\pi,n}:K]=q^{n-1}$.
\end{definition}
Note that $K_{\pi,n}=M_{\pi,n}K_{\pi,1}$ and
$\Gal(M_{\pi,n}/K)\cong\Gal(K_{\pi,n}/K_{\pi,1})$ is of exponent
$p^{n-1}$. The subextensions of $M_{\pi,n}/K$ are characterised as
follows.
\begin{corollary}\label{coro:exponent}
Let $M$ be a finite abelian extension of $K$ with $\pi\in N(M/K)$,
then $M\subseteq M_{\pi,n}$ if and only if $\Gal(M/K)$ is of
exponent dividing $p^{n-1}$. 
\end{corollary}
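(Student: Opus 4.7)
I would prove the two implications separately. The forward direction is almost immediate: if $M \subseteq M_{\pi,n}$, then $\Gal(M/K)$ is a quotient of $\Gal(M_{\pi,n}/K)$, and the paragraph following the definition of $M_{\pi,n}$ explicitly states that the latter has exponent $p^{n-1}$; thus $\Gal(M/K)$ has exponent dividing $p^{n-1}$, and the norm-group hypothesis plays no role on this side.

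For the converse, assume $\pi \in N(M/K)$ and $\Gal(M/K)$ has exponent dividing $p^{n-1}$. The first step is to translate these hypotheses via local class field theory. Set $H := N(M/K)$, an open subgroup of $K^\times$ of finite index with $K^\times/H \cong \Gal(M/K)$. Since this quotient has $p$-power exponent and is finite abelian, it is a $p$-group, so $H$ must contain the prime-to-$p$ subgroup $\mu_{q-1}$; and since $H$ is open and contains $\pi$, we have $H \supseteq \langle \pi \rangle \times \mu_{q-1} \times (1+\bp_K^N) = N(M_{\pi,N}/K)$ for some $N \geq n$. By the Galois correspondence this forces $M \subseteq M_{\pi,N}$.

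To descend from $M_{\pi,N}$ to $M_{\pi,n}$, I would invoke the final assertion of Proposition \ref{GaloisGroups}. Set $G = \Gal(M_{\pi,N}/K)$. Since $M_{\pi,N}K_{\pi,1} = K_{\pi,N}$ and $M_{\pi,N} \cap K_{\pi,1} = K$ (the two factors of the splitting $\Gal(K_{\pi,N}/K) \cong \mu_{q-1} \times (1+\bp_K)/(1+\bp_K^N)$), restriction yields an isomorphism $G \cong \Gal(K_{\pi,N}/K_{\pi,1})$ under which $\Gal(M_{\pi,N}/M_{\pi,n})$ corresponds to $\Gal(K_{\pi,N}/K_{\pi,n})$; the latter is identified with $\Gal(K_{\pi,N}/K_{\pi,1})^{p^{n-1}}$ in the proof of the proposition. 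Hence $\Gal(M_{\pi,N}/M_{\pi,n}) = G^{p^{n-1}}$. The hypothesis on the exponent of $\Gal(M/K) = G/\Gal(M_{\pi,N}/M)$ then forces $G^{p^{n-1}} \subseteq \Gal(M_{\pi,N}/M)$, i.e.\ $\Gal(M_{\pi,N}/M_{\pi,n}) \subseteq \Gal(M_{\pi,N}/M)$, which gives $M \subseteq M_{\pi,n}$.

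The only real obstacle is the bookkeeping in the last step — tracking the $p^{n-1}$-power subgroup through the chain of isomorphisms linking $\Gal(M_{\pi,N}/K)$, $\Gal(K_{\pi,N}/K_{\pi,1})$ and $(1+\bp_K)/(1+\bp_K^N)$ — but this is essentially packaged inside the proof of Proposition \ref{GaloisGroups}, so it reduces to a short verification.
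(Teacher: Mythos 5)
Your proof is correct, and in the converse direction it takes a somewhat different route from the paper's. The paper first notes that $\pi\in N(M/K)$ forces $M\subset K_\pi$ by Lubin-Tate theory, then forms the compositum $M'=MK_{\pi,1}$ (linearly disjoint from $M$ since $M/K$ is a $p$-extension, so $\Gal(M'/K_{\pi,1})\cong\Gal(M/K)$ still has exponent dividing $p^{n-1}$), invokes the final assertion of Proposition \ref{GaloisGroups} to get $M'\subseteq K_{\pi,n}$, and concludes because a $p$-subextension of $K_{\pi,n}/K$ must lie in $M_{\pi,n}$. You instead stay on the norm-group side throughout: openness of $N(M/K)$ together with the $p$-group condition (which absorbs $\mu_{q-1}$) gives $N(M/K)\supseteq N(M_{\pi,N}/K)$ directly, and you then identify $\Gal(M_{\pi,N}/M_{\pi,n})$ with the $p^{n-1}$-power subgroup of $\Gal(M_{\pi,N}/K)$ --- which is precisely the computation carried out inside the proof of Proposition \ref{GaloisGroups}, transported to the $M_{\pi,\bullet}$ tower via the splitting $\Gal(K_{\pi,N}/K)\cong\mu_{q-1}\times(1+\bp_K)/(1+\bp_K^N)$. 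Both arguments hinge on the same key fact (the power-subgroup description of $\Gal(K_{\pi,N}/K_{\pi,n})$); yours makes the class-field-theoretic content explicit where the paper appeals to Lubin-Tate theory and to the packaged last assertion of the proposition, at the modest cost of redoing bookkeeping that assertion already encapsulates. There is no gap.
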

\begin{proof}
Note that since $\pi\in N(M/K)$, $M\subset K_{\pi}$ by Lubin-Tate
theory. If $M\subseteq M_{\pi,n}$, the result is clear by Galois
theory. Suppose that $\Gal(M/K)$ is of exponent dividing $p^{n-1}$, so
that $M/K$ is a $p$-extension, then $M$ is linearly disjoint with
$K_{\pi,1}$ and the compositum $M'=MK_{\pi,1}$ has Galois group over
$K$ isomorphic to $\Gal(M/K)$, thus $M\subset M'\subseteq K_{\pi,n}$
by Proposition \ref{GaloisGroups}. The result follows since $M/K$ is a
$p$-extension. 
\end{proof}

Note also that $N(M_{\pi,n}/K)$ is a subgroup of
$K^\times$ containing $N(K_{\pi,n}/K)$ as a subgroup of index $q-1$;
it follows that 
$$N(M_{\pi,n}/K)=\langle\pi\rangle\times\mu_{q-1}\times(1+\bp_K^n)\enspace.$$
If $n\ge 2$, then $\bo_K^\times\not\subset N(M_{\pi,n}/K)$ and
$c(M_{\pi,n}/K)$, defined as the smallest integer $m$ such that
$1+\bp_K^{m}\subset N(M_{\pi,n}/K)$, equals $n$. In the following, we
are mostly interested in the case $n=2$. We characterise
$M_{\pi,2}$ as the maximal wildly (\textit{i.e.}, non tamely) and
weakly ramified abelian extension of $K$ in $K_\pi$. 
\begin{theorem}\label{weakly-ramified}
Let $M$ be a finite abelian extension of $K$ with $M\not=K$ and
$\pi\in N(M/K)$, then $M/K$ is wildly and weakly ramified if and only
if $M\subseteq M_{\pi,2}$. In particular, $M_{\pi,2}/K$ is weakly
ramified. 
\end{theorem}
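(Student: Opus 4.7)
The plan is to characterize both conditions via the norm group using local class field theory, using the Hasse-Arf theorem as the bridge between lower and upper ramification filtrations. I will treat the two directions separately.

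For the implication ``$M\subseteq M_{\pi,2}$ with $M\neq K$ $\Rightarrow M/K$ wildly and weakly ramified'', I begin with $M_{\pi,2}/K$ itself. From $N(M_{\pi,2}/K)=\langle\pi\rangle\times\mu_{q-1}\times(1+\bp_K^2)$, local class field theory gives the upper ramification filtration of $G=\Gal(M_{\pi,2}/K)$: since $(1+\bp_K)\cdot N(M_{\pi,2}/K)=K^\times$, one has $G^0=G^1=G$, and $G^2=1$ since $1+\bp_K^2\subseteq N(M_{\pi,2}/K)$. Upper numbering is compatible with quotients, so for every intermediate $M$ with $K\subsetneq M\subseteq M_{\pi,2}$, the Galois group $\Gal(M/K)$ satisfies the same pattern in upper numbering. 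Inverting the Herbrand function $\varphi$ translates this to the lower filtration $G_0=G_1=G$, $G_2=1$ (check: with this lower filtration, $\varphi(1)=1$ and $\varphi(u)>1$ for $u>1$, matching the upper side exactly). This gives weak ramification, and since $\Gal(M_{\pi,2}/K)$ has exponent $p$ by Proposition \ref{GaloisGroups}, its nontrivial quotient $\Gal(M/K)$ is a nontrivial $p$-group, so $M/K$ is also wildly ramified.

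For the converse, assume $M/K$ is abelian, wildly and weakly ramified, with $\pi\in N(M/K)$. Then $M\subseteq K_\pi$ by Lubin-Tate theory, so $M/K$ is totally ramified and $G_0=G=\Gal(M/K)$. By hypothesis, $G_1\neq 1$ and $G_2=1$. A direct computation gives $\varphi(1)=|G_1|/|G|$, and the jump $G_1\to\{1\}$ produces a genuine upper-numbering jump at $v=\varphi(1)$. Hasse-Arf, applicable since $M/K$ is abelian, forces $\varphi(1)\in\Z$; combined with $0<|G_1|/|G|\leq 1$, we conclude $G_1=G$, so $G$ is a $p$-group. The upper filtration of $M/K$ is therefore $G^0=G^1=G$, $G^v=1$ for $v>1$, and local class field theory gives $1+\bp_K^2\subseteq N(M/K)$. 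Moreover $\theta_{M/K}(\mu_{q-1})$ is a subgroup of the $p$-group $G$ whose order divides the prime-to-$p$ integer $q-1$, hence is trivial, so $\mu_{q-1}\subseteq N(M/K)$. Combined with $\pi\in N(M/K)$, this yields $N(M_{\pi,2}/K)\subseteq N(M/K)$, hence $M\subseteq M_{\pi,2}$.

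The main subtle point is the Hasse-Arf step in the converse: one must verify that the jump from $G_1\neq\{1\}$ to $G_2=\{1\}$ in lower numbering does translate via $\varphi$ to a genuine jump at $v=|G_1|/|G|$ in upper numbering, so that Hasse-Arf really applies to it. The rest of the argument is a direct application of local class field theory together with the explicit description of $N(M_{\pi,2}/K)$ recalled before the theorem; the ``in particular'' clause is the special case $M=M_{\pi,2}$ of the first implication.
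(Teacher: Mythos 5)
Your proof is correct. The overall architecture matches the paper's (ramification filtrations plus local class field theory applied to the explicitly known norm group of $M_{\pi,2}/K$), but two of your key steps are genuinely different. For the forward direction the paper first proves $M_{\pi,2}/K$ weakly ramified via the conductor identity $c(M_{\pi,2}/K)=2=\frac{g_0+g_1}{g_0}$ (citing Iwasawa) and then passes to subextensions by Herbrand's theorem; your computation of the upper filtration directly from $N(M_{\pi,2}/K)$ and its compatibility with quotients is the same argument in upper-numbering clothing. In the converse, where you invoke Hasse--Arf to force $\varphi(1)=|G_1|/|G_0|\in\Z$ and hence $G_0=G_1$, the paper instead cites \cite[IV.2 Coro.~2 to Prop.~9]{serre} (the commutator congruence showing $G_i=G_{i+1}$ when $|G_0/G_1|\nmid i$, applied at $i=1$); these are closely related facts, with yours being the heavier but more standard tool. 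The more substantive divergence is your endgame: having shown $\langle\pi\rangle\times\mu_{q-1}\times(1+\bp_K^2)\subseteq N(M/K)$, you conclude $M\subseteq M_{\pi,2}$ immediately from the inclusion-reversing bijection of class field theory, whereas the paper instead deduces that $\Gal(M/K)$ has exponent $p$ and routes the conclusion through its Corollary \ref{coro:exponent} (hence through Proposition \ref{GaloisGroups} and Lubin--Tate theory). Your shortcut is clean and valid; the paper's detour has the side benefit of reusing machinery it needs elsewhere, but is not logically necessary here.
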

\begin{proof}
We first show that $M_{\pi,2}/K$ is weakly ramified. Let $g_i$ denote 
the order of the $i$th ramification subgroup of $\Gal(M_{\pi,2}/K)$
(in the lower numbering). One has
$c(M_{\pi,2}/K)=2=\frac{g_0+g_1}{g_0}$, so the first assertion follows
from \cite[Coro. to Lemma 7.14]{Iwasawa}.  

Consider $M$ as in the Proposition, and note that $M/K$ is totally
ramified since $\pi\in N(M/K)$. Suppose $M\subseteq M_{\pi,2}$, then
$M/K$ is a $p$-extension, hence is wildly ramified; since
$M_{\pi,2}/K$ is weakly ramified, it follows from Herbrand's theorem
\cite[IV.3 Prop. 14]{serre} that the same holds for $M/K$,
as in the proof of \cite[Prop. 2.2]{Vinatier_jnumb}. Assume finally
that $M/K$ is wildly 
and weakly ramified, namely $h_1\not=h_2=1$, where $h_i$ denotes the
order of the $i$th ramification subgroup of $\Gal(M/K)$, hence
$h_0=h_1$ by \cite[IV.2 Coro. 2 to Prop. 9]{serre}. It follows that
$M/K$ is a $p$-extension and that $c(M/K)=2$, thus
$\langle\pi\rangle\times\mu_{q-1}\times(1+\bp_K^2)\subseteq
N(M/K)$. We get that
$$K^\times/\ \langle\pi\rangle\times\mu_{q-1}\times(1+\bp_K^2)
\cong(1+\bp_K)/(1+\bp_K^2)
\cong(C_p)^d$$
surjects onto $K^\times/N(M/K)\cong\Gal(M/K)$, which is thus of
exponent $p$, and we conclude using Corollary \ref{coro:exponent}. 
\end{proof}


\section{Galois modules in Lubin-Tate extensions}\label{Gal_mod_section}

In \cite{Pickett} Pickett uses $p$th roots of special values of
Dwork's exponential power series to describe specific elements in
extensions $M/K$ such that: $M/K$ is weakly ramified, $[M:K]=p$ and
$p\in N(M/K)$. An element $\alpha_M\in M$ is constructed such that  
$$Tr_{M/K}(g(\alpha_M),h(\alpha_M))=\delta_{g,h}$$
for all $g,h\in\Gal(M/K)$, where $\delta$ is the Kronecker delta, and
such that $\alpha_M$ generates $\A_{M/K}$, the square-root of the
inverse different of $M/K$, as an integral Galois module: 
$$\A_{M/K}=\bo_K[\Gal(M/K)]\alpha_M\enspace.$$
Namely $\alpha_M$ is a \textit{self-dual normal basis generator} for 
$\A_{M/K}$.

In this section we explain how to modify Pulita's exponentials
$E_2(X)$, described in the Introduction, to construct alternative
self-dual normal basis generators for $\A_{M/K}$. This construction is
more canonical than the former as it does not rely on a specific
choice of basis of the residue field extension $k/\F_p$. 


\subsection{A new over-convergent exponential power series}

We recall that Dwork's power series is defined as
$E_{\gamma}(X)=\exp(\gamma X-\gamma X^p)$, where $\gamma$ is a root of
the polynomial $X^{p-1}+p$. It is over-convergent and converges to a
primitive $p$th root of unity when evaluated at any
$z\in\mu_{p-1}$, see \cite[Ch. 14 \S2-3]{LangII}. We let
$\zeta_p=E_{\gamma}(1)$ and note that $K(\gamma)/K$ is Kummer of
degree $p-1$, totally ramified with uniformising parameter
$\gamma$. We also have $K(\gamma)=K(\zeta_p)$ and we will now denote
this field by $K'$.  
\begin{definition}
For each $u\in\mu_{q-1}$, we define $f_u(X)\in\bo_K[X]$ as
$$f_u(X)=X^p+upX\enspace.$$ 
and let $\omega_{u}$ be a root of the Eisenstein polynomial
$f_u(X)-\gamma\in\bo_{K'}[X]$.
We then let $L_u=K'(\omega_{u})$ and define
$$\mathcal{E}_{u,2}(X)=\exp\left(\omega_uX-u\omega_uX^p+\frac{\gamma X^p-\gamma X^{p^2}}{p}\right)\enspace.$$
\end{definition}
Note that $\omega_u$ is also a root of the Eisenstein polynomial
$f_u(X)^{p-1}+p$ with coefficients in $\bo_{K}$, so $\omega_u$
generates $L_u$ over $K$ as well, namely $L_u=K(\omega_{u})$.

In the case $u=1$, we see that $\mathcal{E}_{1,2}(X)$ is equal to
Pulita's exponential $E_{2}(X)$ as described in the Introduction, with
$f(X)=f_1(X)$. Therefore, from \cite[Theorem 2.5]{Pulita}, we know 
$\mathcal{E}_{1,2}(X)$ is over-convergent. We deduce the following:
\begin{theorem}
For each $u\in\mu_{q-1}$, the power series $\mathcal{E}_{u,2}(X)$ is
over-convergent. 
\end{theorem}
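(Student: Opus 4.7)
The plan is to compare $\mathcal{E}_{u,2}(X)$ with a scaled classical Artin-Hasse exponential, whose over-convergence is automatic, and show that the quotient is an exponential of a power series taking small values on a disc strictly larger than the closed unit disc. Let $AH(T)=\exp\bigl(\sum_{i\ge 0} T^{p^i}/p^i\bigr)$. By Dwork's lemma the coefficients of $AH$ lie in $\Z_p$, so $AH$ converges on $|T|_p<1$. Since $|\omega_u|_p=p^{-1/(p(p-1))}<1$, the substitution $T=\omega_u X$ yields an over-convergent series $AH(\omega_u X)$ on $|X|_p<p^{1/(p(p-1))}$, a disc strictly larger than the closed unit disc.

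Next, write $\mathcal{E}_{u,2}(X)=AH(\omega_u X)\cdot\exp(R_u(X))$, where
$$R_u(X)=\omega_u X-u\omega_u X^p+\tfrac{\gamma}{p}(X^p-X^{p^2})-\sum_{i\ge 0}\tfrac{\omega_u^{p^i}}{p^i}X^{p^i}.$$
The key algebraic input is the defining relation $\omega_u^p=\gamma-up\omega_u$, equivalently $\omega_u^p/p=\gamma/p-u\omega_u$, which exactly annihilates both the $X$ and the $X^p$ contributions to $R_u$, leaving
$$R_u(X)=-\Bigl(\tfrac{\gamma}{p}+\tfrac{\omega_u^{p^2}}{p^2}\Bigr)X^{p^2}-\sum_{i\ge 3}\tfrac{\omega_u^{p^i}}{p^i}X^{p^i}.$$
For $i\ge 3$ one has $v_p(\omega_u^{p^i}/p^i)=p^{i-1}/(p-1)-i$, which grows rapidly. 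For the $X^{p^2}$ coefficient, I would expand $\omega_u^{p^2}=(\gamma-up\omega_u)^p$ via the binomial theorem: the leading $\gamma^p=-p\gamma$ gives $\gamma^p/p^2=-\gamma/p$, which cancels the $-\gamma/p$ outside exactly, while each remaining binomial summand has $v_p\ge 1+1/(p(p-1))$ (using $v_p(\binom{p}{k})\ge 1$ for $1\le k\le p-1$, together with $v_p(\gamma)=1/(p-1)$ and $v_p(\omega_u)=1/(p(p-1))$).

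These estimates show that for some $\epsilon>0$, $|R_u(X)|_p<p^{-1/(p-1)}$ on the disc $|X|_p<1+\epsilon$, so the values of $R_u(X)$ lie in the convergence disc of $\exp$. Hence $\exp(R_u(X))$ is over-convergent, and so is the product $\mathcal{E}_{u,2}(X)=AH(\omega_u X)\exp(R_u(X))$. The main technical obstacle is the $i=2$ term: the delicate leading-order cancellation $\gamma^p/p^2+\gamma/p=0$ must be followed by a careful bookkeeping of the subleading binomial contributions to achieve the explicit lower bound $v_p\ge 1+1/(p(p-1))$. The $i\ge 3$ estimates are then immediate from the rapid growth of $p^{i-1}/(p-1)-i$, and matching all bounds into a single over-convergent disc for $\exp(R_u(X))$ is routine.
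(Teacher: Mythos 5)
Your proof is correct, but it takes a genuinely different route from the paper. The paper does not estimate anything directly: it introduces the auxiliary series $A_u(X)=\exp\bigl(f_1^2(\omega_u X)/p^2\bigr)$, observes that $A_u(X)=A_1(\omega_u\omega_1^{-1}X)$ so that over-convergence of $A_u$ is equivalent to that of $A_1$, and then expands $f_1^2(\omega_u X)$ using $\omega_u^p=\gamma-up\omega_u$ to show that $A_u(X)$ and $\mathcal{E}_{u,2}(X)$ differ by a factor $\exp(pB_u(X))$ with $B_u$ a polynomial with integral coefficients, hence over-convergent. This reduces the whole family to the single case $u=1$, which is Pulita's Theorem 2.5 and is quoted, not re-proved. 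You instead give a self-contained argument: divide by the Artin--Hasse series $AH(\omega_u X)$, use the same relation $\omega_u^p=\gamma-up\omega_u$ (together with $\gamma^{p-1}=-p$) to cancel the $X$, $X^p$ and the leading part of the $X^{p^2}$ contributions, and bound the valuations of what remains. I checked your cancellations and estimates: the $X^{p^2}$ coefficient indeed has valuation at least $1+\tfrac{1}{p(p-1)}$ after the cancellation $\gamma^p/p^2=-\gamma/p$, and $v_p(\omega_u^{p^i}/p^i)=p^{i-1}/(p-1)-i$ dominates $p^i\log_p(1+\epsilon)$ for $\epsilon$ small, so a single radius $1+\epsilon>1$ works for all terms. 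Two small points of care: what you actually need is the Gauss-norm statement $\sup_j|c_j|_p(1+\epsilon)^j<p^{-1/(p-1)}$ for the coefficients $c_j$ of $R_u$ (which is exactly what your term-by-term bounds give), rather than a pointwise statement about ``values of $R_u$''; and the uniform choice of $\epsilon$ across all $i\ge 3$ deserves the one line of verification sketched above, since the degrees $p^i$ grow. What each approach buys: the paper's is shorter and leans on the literature, making the dependence on Pulita's result transparent; yours is elementary, independent of \cite{Pulita}, and produces an explicit lower bound on the radius of over-convergence.
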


\begin{proof}
For $u\in\mu_{q-1}$, we consider the power series 
$$A_u(X)
=\exp\left(\frac{f_1^2(\omega_{u}X)}{p^2}\right)$$ and note that
$A_u(X)=A_1(\frac{\omega_u}{\omega_1}X)$ is over-convergent if and
only if $A_1(X)$ is over-convergent. We note that $\gamma^{p-1}=-p$
and $\omega_{u}^p=\gamma-up\omega_{u}$; we make the following
derivation:   
\begin{eqnarray*}
{f_1^2(\omega_{u}X)}
&=&{(\omega_{u}^pX^p+p\omega_{u}X)^p+p(\omega_{u}^pX^p+p\omega_{u}X)}\\
&=&{((\gamma-up\omega_{u})X^p+p\omega_{u}X)^p+p((\gamma-up\omega_{u})X^p+p\omega_{u}X)}\\ 
&=&{(\gamma X^p+p\omega_{u}(X-uX^p))^p+p(\gamma X^p+p\omega_{u}(X-uX^p))}\\
&=&\gamma^pX^{p^2}+p^3B_u(X)+p\gamma X^p+p^2\omega_u(X-uX^p)
\end{eqnarray*}
for some $B_u(X)\in\bo_K[\omega_{u}][X]$. Therefore,
$$A_u(X)=\exp\Big(\omega_{u}X-\omega_{u}uX^p+\frac{\gamma X^p-\gamma
  X^{p^2}}{p}+pB_u(X)\Big)\enspace.$$ 
The power series $\exp(pB_u(X))$ is over-convergent and therefore
$\mathcal{E}_{u,2}(X)$ is over-convergent if and only if $A_{u}(X)$ is
over-convergent. From \cite[Theorem 2.5]{Pulita} we know that
$\mathcal{E}_{1,2}(X)$ is over-convergent, which implies $A_1(X)$ and
thus $A_u(X)$ are over-convergent. Therefore, $\mathcal{E}_{u,2}(X)$
is over-convergent.  
\end{proof}


\subsection{Kummer and self-dual normal basis generators}

\begin{theorem}\label{KummerGenerator}
Let $v\in\mu_{q-1}$ and set $u=v^{1-p}$, then $\mathcal{E}_{u,2}(v)$ is a
Kummer generator of $L_u/K'$; further the extension $L_u/K$ is cyclic
of degree $p(p-1)$. 
\end{theorem}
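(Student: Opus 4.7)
The plan is to prove the theorem in three stages.

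\emph{Stage 1: $\mathcal{E}_{u,2}(v)^p\in K'$.} I would use the formal identity $\exp(A)^p=\exp(pA)$ to factor
\[
\mathcal{E}_{u,2}(X)^p=\exp\bigl(p\omega_u(X-uX^p)\bigr)\cdot E_\gamma(X^p).
\]
The first factor is $\exp$ of a polynomial whose coefficients all have valuation at least $1+\frac{1}{p(p-1)}>\frac{1}{p-1}$, hence it converges on the closed unit disc; the relation $u=v^{1-p}$ forces $v-uv^p=0$, so evaluation at $X=v$ yields $\exp(0)=1$. The second factor $E_\gamma(X^p)$ is the over-convergent Dwork series with coefficients in $K'$, so its value at $v$ lies in $K'$. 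This gives $\mathcal{E}_{u,2}(v)^p=E_\gamma(v^p)\in K'$.

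\emph{Stage 2: $L_u=K'(\mathcal{E}_{u,2}(v))$.} Next I would rule out $\mathcal{E}_{u,2}(v)\in K'$ by a valuation argument. The formal expansion $\mathcal{E}_{u,2}(X)=1+\omega_u X+\frac{\omega_u^2}{2}X^2+\cdots$ has linear coefficient of valuation $v(\omega_u)=\frac{1}{p(p-1)}$, with $\omega_u$ Eisenstein of degree $p$ over $K'$. For $2\le n\le p-1$ the coefficient is simply $\omega_u^n/n!$ of valuation $\frac{n}{p(p-1)}$; at $n=p$ the two contributions combine, using $\omega_u^p=\gamma-up\omega_u$, to $(1+(p-1)!)\omega_u^p/p!$, whose valuation is $\ge\frac{1}{p-1}$ by Wilson's theorem; the over-convergence of $\mathcal{E}_{u,2}$ handles $n>p$. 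Thus $v(\mathcal{E}_{u,2}(v)-1)=\frac{1}{p(p-1)}$ lies outside the value group $\frac{1}{p-1}\mathbb{Z}$ of $K'$, giving $\mathcal{E}_{u,2}(v)\notin K'$. Combined with Stage 1 this forces $[K'(\mathcal{E}_{u,2}(v)):K']=p=[L_u:K']$ and hence $L_u=K'(\mathcal{E}_{u,2}(v))$, a cyclic Kummer extension of degree $p$.

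\emph{Stage 3: $L_u/K$ cyclic of degree $p(p-1)$.} The degree $p(p-1)$ is immediate from the tower $K\subset K'\subset L_u$. To see $L_u/K$ is Galois, fix $\sigma\in\Gal(K'/K)$ with $\sigma(\gamma)=\zeta\gamma$ for $\zeta\in\mu_{p-1}$; since $x\mapsto x^p$ bijects $\mu_{p-1}$, write $\zeta=\eta^p$, and use $\zeta^p=\zeta$ together with $\eta^{p-1}=1$ to compute
\[
\sigma(\mathcal{E}_{u,2}(v)^p)=E_{\zeta\gamma}(v^p)=E_\gamma(\zeta v^p)=E_\gamma((\eta v)^p)=\mathcal{E}_{u,2}(\eta v)^p,
\]
the last equality being Stage 1 applied with $\eta v$ in place of $v$ (same $u$, since $(\eta v)^{1-p}=u$). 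Thus $\sigma(\mathcal{E}_{u,2}(v)^p)$ is a $p$-th power in $L_u$, so $\sigma$ extends to $L_u$ and $L_u/K$ is Galois. Upgrading from Galois to cyclic is where I expect the main obstacle: one must show the tame quotient $\Gal(K'/K)\cong C_{p-1}$ acts trivially on the wild inertia $\Gal(L_u/K')\cong C_p$. The natural route is to exhibit a $K$-Galois degree-$p$ subextension $M_u\subset L_u$ linearly disjoint from $K'$ --- the element $\alpha_u=p^{-1}\sum_{s\in S}\mathcal{E}_{u,2}(v)^s$ from the Introduction is the obvious candidate for a generator --- which together with $L_u=M_u\cdot K'$ forces $\Gal(L_u/K)\cong C_p\times C_{p-1}\cong C_{p(p-1)}$.
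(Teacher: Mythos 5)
Your Stage 1 matches the paper's computation exactly, and your argument that $L_u/K$ is normal (every $K$-embedding sends $\mathcal{E}_{u,2}(v)$ to a $p$-th root of $\sigma(E_\gamma(v^p))$, all of which lie in $L_u$) is essentially the paper's as well. But there are two genuine gaps.

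First, in Stage 2 your claim that ``the over-convergence of $\mathcal{E}_{u,2}$ handles $n>p$'' is not a valid inference. Over-convergence on a disc of radius $1+\epsilon$ only gives $v(a_n)\ge n\log_p(1+\epsilon)-C$ asymptotically; it says nothing about the individual coefficients $a_n$ for $n$ just beyond $p$, and you need every one of them to have valuation strictly greater than $\frac{1}{p(p-1)}$ for your leading-term argument to identify $v(\mathcal{E}_{u,2}(v)-1)$. (The conclusion is true, but the clean way to get it is from Stage 1: $\mathcal{E}_{u,2}(v)^p=E_\gamma(v^p)\equiv 1+v^p\gamma \bmod \gamma^2$, so $E_\gamma(v^p)\in(1+\bp_{K'})\setminus(1+\bp_{K'}^2)$, while $(1+\bp_{K'})^p\subseteq 1+\bp_{K'}^p$; hence $E_\gamma(v^p)\notin(K'^\times)^p$ and $[K'(\mathcal{E}_{u,2}(v)):K']=p$ by Kummer theory. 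This is the paper's route and avoids any coefficient estimates beyond degree one of $E_\gamma$.)

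Second, and more seriously, Stage 3 does not prove cyclicity: you reduce it to the existence of a degree-$p$ subextension $M_u\subset L_u$ that is Galois over $K$, but that existence is \emph{equivalent} to what you are trying to prove. Since $\Gal(L_u/K')$ is the normal Sylow $p$-subgroup of $G=\Gal(L_u/K)$, $G$ is a semidirect product $C_p\rtimes C_{p-1}$, and a complement $H$ of order $p-1$ is normal (equivalently, its fixed field is Galois over $K$ of degree $p$) if and only if $G$ is already abelian. In the paper, $M_u$ is \emph{defined} as the unique degree-$p$ subextension of $L_u/K$ only after cyclicity is known, and the fact that $\alpha_u$ generates it is proved later still; so invoking $\alpha_u$ here is circular. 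The paper closes this gap by a direct computation: writing $\wt\delta(\mathcal{E}_{u,2}(v))=\zeta\,\mathcal{E}_{u,2}(v)^z$ for a lifting of a generator $\delta$ of $\Gal(K'/K)$, choosing the lifting with $\zeta=1$ (which has order $p-1$), and checking $\phi\circ\wt\delta=\wt\delta\circ\phi$ on $\mathcal{E}_{u,2}(v)$ using $\wt\delta(\zeta_p)=\zeta_p^z$, where $\phi$ generates $\Gal(L_u/K')$. This shows $G$ is abelian, hence $C_p\times C_{p-1}\cong C_{p(p-1)}$. Some such explicit commutation argument is needed; it cannot be replaced by an appeal to $M_u$ or $\alpha_u$.
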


\begin{proof} 
A straightforward calculation shows that:
$$\mathcal{E}_{u,2}(X)^p
=\exp(p\omega_uX)\exp(p\omega_uuX^p)^{-1}E_\gamma(X^p)\enspace,$$
where both $\exp(p\omega_uX)$ and $\exp(p\omega_uuX^p)$ are
over-convergent. Further, $v=uv^{p}$, and so
$\mathcal{E}_{u,2}(v)^p=E_{\gamma}(v^{p})$, which clearly belongs to
$K'=K(\gamma)$. 

By the properties of Dwork's power series one has
$E_{\gamma}(v^{p})\equiv 1+v^{p}\gamma\mod \gamma^2\bo_{K'}$, so
$E_{\gamma}(v^{p})\in 1+\bp_{K'}\setminus 1+\bp_{K'}^2\,$. Note that 
${K'}^\times=\langle\gamma\rangle\times\mu_{q-1}\times 1+\bp_{K'}$ and
that $(1+\bp_{K'})^p\subseteq 1+\bp_{K'}^p\,$, hence
$E_{\gamma}(v^{p})\notin(K'^{\times})^p$. It follows that
$K'\big(\mathcal{E}_{u,2}(v)\big)/K$ is Kummer of degree $p$. Since clearly
$K'\big(\mathcal{E}_{u,2}(v)\big)\subseteq K'(\omega_{u})=L_u$, a
comparison of the degrees then gives equality and it follows that
$L_u/K'$ is cyclic of degree $p$. 

We now show that $L_u/K$ is Galois. Let $\delta\in\Gal(K'/K)$, then
$\delta:\gamma\mapsto z\gamma$ for some $ z\in\mu_{p-1}$. Any
$w\in\bo_K^{\times}$ is fixed by $\delta$ and contained in the disc of
convergence of $E_{\gamma}(X)=\exp(\gamma X-\gamma
X^p)=\sum_{n=0}^{+\infty}e_nX^n$. One has: 
$$\sum_{n=0}^{+\infty}\delta(e_n)X^n=\exp(\delta(\gamma)X-\delta(\gamma)X^p)=\exp\big(\gamma(zX)-\gamma(zX)^p\big)=E_\gamma(zX)\enspace,$$
which implies
$$\textstyle
\delta(E_{\gamma}(w))=\delta\left(\sum\limits_{n=0}^{+\infty}e_nw^n\right)=\sum\limits_{n=0}^{+\infty}\delta(e_n)w^n=E_\gamma(zw)=E_\gamma(w)^z\enspace,
$$
using \cite[Lemma 4.2]{PickettVinatier}. Let $\wt{L}_u$ be the Galois 
closure of $L_u/K$ and let $\wt{\delta}\in\Gal(\wt{L}_u/K)$ be such
that $\wt{\delta}|_{K'}=\delta$. We observe that 
\begin{eqnarray*}
\wt{\delta}(\mathcal{E}_{u,2}(v))^p
=\wt{\delta}(\mathcal{E}_{u,2}(v)^p)
=\delta(E_{\gamma}(v^{p}))
=E_{\gamma}(v^{p})^z=\mathcal{E}_{u,2}(v)^{pz}\enspace.
\end{eqnarray*}
We must then have, for some $\zeta\in\mu_p$:
\begin{equation}\label{lifting}
\wt{\delta}(\mathcal{E}_{u,2}(v))=\zeta \mathcal{E}_{u,2}(v)^z\enspace,
\end{equation}
which means that all the Galois conjugates of
$\mathcal{E}_{u,2}(v)$ over $K$ are contained in $L_u$, and so
$L_u=K\big(\mathcal{E}_{u,2}(v)\big)$ is Galois over $K$. 

Each $\delta\in\Gal(K'/K)$ has $p$ liftings in $\Gal(L_u/K)$, given by 
the various choices of the $p$-th root of unity $\zeta$ in 
(\ref{lifting}). Suppose $\delta$ generates $\Gal(K'/K)$ and let
$\wt\delta$ now denote the lifting of $\delta$ corresponding to
$\zeta=1$, then $\wt\delta$ generates a subgroup of
$\Gal(L_u/K)$ of order $p-1$. Recall that $\zeta_p$ is a primitive
$p$-th root of unity, and note that
$\wt\delta(\zeta_p)=\zeta_p^z$ from \cite[Lemma
  10]{Pickett}. Let $\phi$ denote the generator
of $\Gal(L_u/K')$ defined by $\phi(\mathcal{E})=\zeta_p\mathcal{E}$,
where we let 
$\mathcal{E}=\mathcal{E}_{u,2}(v)$ for ease of notation. Then
$\phi\circ\wt\delta(\mathcal{E})
=\zeta_p^z\mathcal{E}^z
=\wt\delta\circ\phi(\mathcal{E})$,
namely $\Gal(L_u/K)$ is abelian, hence the result.
\end{proof}

The description of the extensions $L_u/K$ given in Theorem
\ref{KummerGenerator} is only available when $u\in\mu_{q-1}$ is a
$(p-1)$-th power, namely when $u$ belongs to the subgroup
$(\mu_{q-1})^{p-1}=\mu_{\frac{q-1}{p-1}}$ of $\mu_{q-1}$. If this is
not the case, we are not able to prove that $L_u/K'$ is Galois. 
However, we shall see below that the extensions described in the
theorem are the only ones of interest to us, see Remark \ref{remark}. 

From now on, we thus restrict ourselves to the units
$u\in(\mu_{q-1})^{p-1}$. It amounts to the same as restricting to the
units $v\in\mu_{q-1}/\mu_{p-1}$, since $v\mapsto v^{1-p}$ yields an
isomorphism between $\mu_{q-1}/\mu_{p-1}$ and $(\mu_{q-1})^{p-1}$. We
remark that $\sigma$ acts on both these groups and commutes with this
isomorphism. Let $\PP(k)=k^\times/\F_p^\times$ denote the projective
space of $k$ considered as an $\F_p$-vector space of dimension $d$; in
the following we identify $\PP(k)$ with $\mu_{q-1}/\mu_{p-1}$ through
the Teichm\"uller lifting. 

\begin{definition}\label{Mu_defn} 
For any $u\in(\mu_{q-1})^{p-1}$, we let $M_u$ be the unique
subextension of $L_u/K$ of degree $p$.
\end{definition}
\begin{proposition}\label{Mu_lemma}
Let $u\in(\mu_{q-1})^{p-1}$, then $M_u$ is the splitting field of
$\psi_u(X)=X(X+up)^{p-1}+p$ and
$$M_u=K(\omega_{u}^{p-1})\subseteq M_{p,2}\enspace.$$
\end{proposition}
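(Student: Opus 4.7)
The plan is to identify $\omega_u^{p-1}$ as a root of an Eisenstein polynomial of degree $p$ over $\bo_K$, namely $\psi_u$, and then conclude via the general framework of Section \ref{sec:lubin-tate}.

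First, I would derive the polynomial relation satisfied by $\beta=\omega_u^{p-1}$. Starting from $\omega_u^p+up\omega_u=\gamma$, factor out $\omega_u$ to get $\omega_u(\beta+up)=\gamma$, hence $\omega_u=\gamma/(\beta+up)$. Raising to the $(p-1)$-th power and using $\gamma^{p-1}=-p$ yields $\beta(\beta+up)^{p-1}=-p$, so $\beta$ is a root of $\psi_u(X)=X(X+up)^{p-1}+p$.

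Next, I would verify that $\psi_u$ is Eisenstein at $p$, which gives the degree and irreducibility. Expanding with the binomial theorem, the coefficient of $X^{p-j}$ in $X(X+up)^{p-1}$ is $\binom{p-1}{j}u^jp^j$, so for $1\le j\le p-1$ this coefficient has $p$-adic valuation exactly $j\ge 1$; the leading coefficient is $1$ and the constant term is $p$, of valuation $1$. Hence $\psi_u$ is Eisenstein over $\bo_K$, in particular irreducible of degree $p$, and $K(\beta)/K$ is totally ramified of degree $p$ with uniformiser $\beta$. Since $\beta\in L_u$ and $L_u/K$ is cyclic of degree $p(p-1)$ by Theorem \ref{KummerGenerator}, $K(\beta)$ coincides with the unique degree $p$ subextension, so $M_u=K(\omega_u^{p-1})$. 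Being a subextension of the abelian $L_u/K$, the extension $M_u/K$ is Galois, hence it contains all conjugates of $\beta$ and is the splitting field of $\psi_u$.

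It remains to show $M_u\subseteq M_{p,2}$, for which I would invoke Corollary \ref{coro:exponent} with $\pi=p$ and $n=2$. The conditions to check are that $M_u/K$ is abelian (clear, as a subextension of cyclic $L_u/K$), that $\Gal(M_u/K)\cong C_p$ has exponent dividing $p=p^{2-1}$ (clear), and that $p\in N(M_u/K)$. For the last point, the Eisenstein polynomial $\psi_u$ gives $N_{M_u/K}(\omega_u^{p-1})=(-1)^p\cdot p=-p$; since $[M_u:K]=p$ is odd, $N_{M_u/K}(-1)=-1$, so $p=(-1)\cdot(-p)\in N(M_u/K)$, and Corollary \ref{coro:exponent} applies.

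The only mildly delicate step is verifying that $\psi_u$ is Eisenstein, but this reduces to a direct binomial computation; everything else is bookkeeping with the cyclic extension $L_u/K$ and the norm of a uniformiser. I do not foresee a serious obstacle.
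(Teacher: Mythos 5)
Your proof is correct and follows essentially the same route as the paper's: identify $\psi_u(X)=X(X+up)^{p-1}+p$ as the degree-$p$ minimal polynomial of $\omega_u^{p-1}$, conclude $M_u=K(\omega_u^{p-1})$ because the cyclic extension $L_u/K$ of Theorem \ref{KummerGenerator} has a unique degree-$p$ subextension (whence the splitting-field claim by normality), and then apply Corollary \ref{coro:exponent} using the norm of $\omega_u^{p-1}$. Your treatment is in fact marginally more careful on one point: the norm of $\omega_u^{p-1}$ is $(-1)^p\psi_u(0)=-p$ rather than $p$ as stated in the paper, and your observation that $-1=N_{M_u/K}(-1)$ (odd degree) repairs this harmlessly.
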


\begin{proof}
By Theorem \ref{KummerGenerator}, $L_u=K(\omega_{u})$ is the splitting
field over $K$ of the polynomial  
$$f_1(f_u(X))=(X^p+upX)^p+p(X^p+upX)\enspace.$$
Therefore $L_u$ also splits the Eisenstein polynomial
$X^{p-1}(X^{p-1}+up)^{p-1}+p$ which is the minimum polynomial of
$\omega_{u}$ over $K$. This implies that the minimum polynomial of
$\omega_{u}^{p-1}$ over $K$ is $\psi_u(X)=X(X+up)^{p-1}+p$. 
The equality $M_u=K(\omega_{u}^{p-1})$ now follows as this polynomial
is of degree $p$ and $L_u/K$ has only one subextension of degree $p$. 
Further, $N_{M_u/K}(\omega_{u}^{p-1})=p$ and $\Gal(M_u/K)$ has
exponent $p$, therefore $M_u\subseteq M_{p,2}$ from Corollary
\ref{coro:exponent}. 
\end{proof}
It follows that $M_u/K$ is weakly ramified by Proposition
\ref{weakly-ramified}, hence its square-root of the inverse different
admits a self-dual normal basis generator, by \cite[Theorem 1]{erez2}
and \cite[Corollary 4.7]{Fainsilber+Morales}.

\begin{theorem}
Let $v\in\PP(k)$, set $u=v^{1-p}\in(\mu_{q-1})^{p-1}$, and let
$\A_{M_u/K}$ denote the unique fractional ideal of $\bo_{M_u}$ whose
square is equal to the inverse different of $M_{u}/K$. Set
$S=\mu_{p-1}\cup\{0\}$, then: 
$$\alpha_u=\frac{1+Tr_{L_u/M_u}(\mathcal{E}_{u,2}(v))}{p}=\frac{\sum_{s\in
    S}\mathcal{E}_{u,2}(v)^s}{p}$$  
is a self-dual integral normal basis generator for $\A_{M_u/K}$. 
\end{theorem}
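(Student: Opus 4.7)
The proof follows the general strategy of Pickett \cite{Pickett}. Since $M_u/K$ is weakly ramified of degree $p$ by Proposition \ref{Mu_lemma} and Theorem \ref{weakly-ramified}, $\A_{M_u/K}$ is a free $\bo_K[G]$-module of rank one by \cite[Theorem 1]{erez2}, where $G=\Gal(M_u/K)\cong C_p$; it is also a self-dual $\bo_K$-lattice in $M_u$ with respect to the trace pairing, since $\A_{M_u/K}^{2}=\D_{M_u/K}^{-1}=\bo_{M_u}^{*}$. It therefore suffices to prove two assertions: \emph{integrality}, $\alpha_u\in\A_{M_u/K}$, and \emph{self-duality}, $Tr_{M_u/K}(\alpha_u\cdot g(\alpha_u))=\delta_{g,1_G}$ for all $g\in G$. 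Together these force $\bo_K[G]\alpha_u$ to be a self-dual $\bo_K$-sublattice of the self-dual lattice $\A_{M_u/K}$, hence equal to it. The second expression for $\alpha_u$ in the statement holds because $\Gal(L_u/M_u)$ is the unique subgroup of order $p-1$ of the cyclic group $\Gal(L_u/K)$, and by the proof of Theorem \ref{KummerGenerator} is generated by the lifting $\wt\delta$ with $\zeta=1$ in (\ref{lifting}); the identities $\wt\delta^{i}(\mathcal{E}_{u,2}(v))=\mathcal{E}_{u,2}(v)^{z_0^{i}}$, with $z_0$ a primitive $(p-1)$-th root of unity fixed as in that proof, then give $Tr_{L_u/M_u}(\mathcal{E}_{u,2}(v))=\sum_{z\in\mu_{p-1}}\mathcal{E}_{u,2}(v)^{z}$.

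For self-duality, the plan is to lift to $L_u$ via $Tr_{M_u/K}(x)=\frac{1}{p-1}Tr_{L_u/K}(x)$ for $x\in M_u$, and identify $G$ with $\langle\phi|_{M_u}\rangle$, where $\phi$ is the generator of $\Gal(L_u/K')$ satisfying $\phi(\mathcal{E}_{u,2}(v))=\zeta_p\mathcal{E}_{u,2}(v)$ from the proof of Theorem \ref{KummerGenerator}. Writing $g=\phi^{r}$ and $\mathcal{E}=\mathcal{E}_{u,2}(v)$, expand
$$p^{2}\alpha_u\phi^{r}(\alpha_u)=\sum_{s,s'\in S}\phi^{r}(\mathcal{E}^{s'})\mathcal{E}^{s}=\sum_{s,s'\in S}\chi_r(s')\,\mathcal{E}^{s+s'},$$
with $\chi_r(0)=1$ and $\chi_r(z)=\zeta_p^{rz}$ for $z\in\mu_{p-1}$; this reduces the problem to evaluating $Tr_{L_u/K}(\mathcal{E}^{m})$ for $m=s+s'$. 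The decomposition $\Gal(L_u/K)=\langle\phi\rangle\times\langle\wt\delta\rangle$ and the identity $\phi^{j}(\mathcal{E}^{m})=\zeta_p^{jm}\mathcal{E}^{m}$ show that the sum over $j=0,\dots,p-1$ vanishes unless $p\mid m$; when $p\mid m$, $\mathcal{E}^{m}=E_\gamma(v^{p})^{m/p}\in K'$, and the remaining $Tr_{L_u/K'}$ is controlled by the action of $\wt\delta$ on $E_\gamma$-values as in the proof of Theorem \ref{KummerGenerator} (see also \cite[Lemma 4.2]{PickettVinatier}). Standard identities for sums of $(p-1)$-th roots of unity modulo $p$ then collapse the expression to $\delta_{r,0}$.

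For integrality, use the expansion $\mathcal{E}_{u,2}(v)=1+\omega_u v+O(\omega_u^{2})$ in $\bo_{L_u}$ (with $\omega_u$ a uniformiser of $L_u$), together with $\wt\delta^{i}(\omega_u)\equiv z_0^{i}\omega_u\pmod{\omega_u^{2}}$ (obtained from $\wt\delta^{i}(\omega_u)^{p}\equiv z_0^{i}\omega_u^{p}\pmod{\omega_u^{p+1}}$ and the fact that Frobenius is the identity on $\F_p$), to compute
$$\sum_{i=0}^{p-2}\bigl(\wt\delta^{i}(\mathcal{E}_{u,2}(v))-1\bigr)\equiv v\omega_u\sum_{i=0}^{p-2}z_0^{i}\equiv 0\pmod{\omega_u^{2}},$$
using $\sum_{z\in\mu_{p-1}}z=0$. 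Since this sum equals $Tr_{L_u/M_u}(\mathcal{E}_{u,2}(v))-(p-1)\in M_u$, its $L_u$-valuation is a multiple of $p-1$, hence at least $p-1$; combined with $v_{L_u}(p)=p(p-1)\ge p-1$, this yields $v_{L_u}(p\alpha_u)\ge p-1$, equivalent to $v_{M_u}(\alpha_u)\ge-(p-1)$, i.e.\ $\alpha_u\in\bp_{M_u}^{-(p-1)}=\A_{M_u/K}$. The main obstacle is the trace computation for self-duality: while the structural reduction (only $s+s'\equiv 0\pmod p$ contributes) is transparent, obtaining precisely $\delta_{g,1_G}$ rather than merely a nonzero constant along the diagonal requires careful tracking of the residual character sums and of the values of Dwork's exponential on Teichm\"uller units.
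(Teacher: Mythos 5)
Your proposal takes the same route as the paper: the paper isolates three properties of $\mathcal{E}_{u,2}(v)$ (Kummer generator of $L_u/K'$; $\mathcal{E}_{u,2}(v)^p-1$ a uniformiser of $K'$; the involution of $\Gal(L_u/M_u)$ inverts $\mathcal{E}_{u,2}(v)$) and then defers entirely to the proof of \cite[Theorem 12]{Pickett}, whereas you carry out that computation directly. Your overall structure (integrality plus orthonormality of the conjugates of $\alpha_u$, combined with self-duality of $\A_{M_u/K}$, forces $\bo_K[G]\alpha_u=\A_{M_u/K}$) is sound. Two points. First, the step you flag as the ``main obstacle'' in the self-duality computation closes more easily than you fear: since $p$ is odd, $-s\in S$ for every $s\in S$, and reduction modulo $p$ is a bijection from $S=\mu_{p-1}\cup\{0\}$ onto $\F_p$; hence $s+s'\equiv0\pmod p$ with $s,s'\in S$ forces $s'=-s$ \emph{exactly}, so the only surviving terms have $\mathcal{E}_{u,2}(v)^{s+s'}=1$ and $Tr_{L_u/K}\big(\mathcal{E}_{u,2}(v)^{s+s'}\big)=p(p-1)$. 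The pairing then reduces to $\frac1p\sum_{s\in S}\zeta_p^{-rs}$, which equals $\delta_{r\equiv0\ (p)}$ by the standard character sum; no values of Dwork's exponential on Teichm\"uller units enter (and the trace still to be controlled after summing over $\Gal(L_u/K')$ is $Tr_{K'/K}$, not $Tr_{L_u/K'}$ as you wrote). Second, the expansion $\mathcal{E}_{u,2}(v)=1+v\omega_u+O(\omega_u^2)$ on which your integrality argument rests cannot be read off the defining exponential, whose argument contains the term $\gamma(v^p-v^{p^2})/p$ of negative valuation; it must instead be deduced from $\mathcal{E}_{u,2}(v)^p=E_\gamma(v^p)\equiv1+v^p\gamma\pmod{\gamma^2\bo_{K'}}$ together with $\gamma\equiv\omega_u^p\pmod{\bp_{L_u}^{p+1}}$ and uniqueness of $p$th roots in the residue field --- this is exactly the paper's second highlighted property. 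With these two repairs your argument is complete and matches what Pickett's cited proof does.
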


\begin{proof}
As in the proof of the previous theorem we know that
$\Gal(L_u/M_u)=\langle\,\wt{\delta}\,\rangle$, where
$\wt{\delta}(\mathcal{E}_{u,2}(v))=\mathcal{E}_{u,2}(v)^z$ for some
primitive $(p-1)$th root of unity $z$. This proves the equivalence of
the presentations of $\alpha_u$. 

In order to use the methods in \cite{Pickett} we highlight the
following three properties of $\mathcal{E}_{u,2}(v)$: 
\begin{enumerate}
\item $\mathcal{E}_{u,2}(v)$ is a Kummer generator of $L_u/K'$. 
\item From \cite[Ch. 14 \S2]{LangII} we know that
  $E_{\gamma}(v^p)=1+v^p\gamma\mod\gamma^2$. Therefore,
  $E_{\gamma}(v^p)-1=\mathcal{E}_{u,2}(v)^{p}-1$ is a uniformising
  parameter of $K'$.
\item The involution element of $\Gal(L_u/M_u)$ maps
  $\mathcal{E}_{u,2}(v)$ to $\mathcal{E}_{u,2}(v)^{-1}$. 
\end{enumerate}
The proof that $\alpha_u$ generates a self-dual normal basis for
$\A_{M_u/K}$ now follows the exact method of proof of \cite[Theorem
  12]{Pickett}. 
\end{proof}


\section{The degree $p$ subextensions of $M_{p,2}/K$}\label{class_field_section}
Recall that $M_{\pi,2}$ is the unique subfield of $K_{\pi,2}$ such 
that $[M_{\pi,2}:K]=q$, with norm group:
$$N(M_{\pi,2}/K)=\langle\,\pi\,\rangle\times\mu_{q-1}\times 1+\bp_K^2\enspace.$$ 
Specifically $M_{\pi,2}=M_{w\pi,2}$ for all $w\in\mu_{q-1}$. Since
$\Gal(M_{\pi,2}/K)\cong(C_p)^d$, we know that $M_{\pi,2}/K$ has 
$\frac{q-1}{p-1}=1+p+\cdots+p^{d-1}$ subextensions of degree
$p$. Recall that for each $u\in (\mu_{q-1})^{p-1}$ we have defined an 
extension $M_u/K$ of degree $p$, such that $M_u\subseteq M_{p,2}$, see
Proposition \ref{Mu_lemma}.

We now fix $u\in(\mu_{q-1})^{p-1}$ and calculate the norm group
$N(M_u/K)$ of $M_u/K$. As $M_u\subseteq M_{p,2}$, from standard local
class field theory we know that 
\begin{equation}\label{SubgroupOfNormGroup}
N(M_{p,2}/K)=\langle\,p\,\rangle\times\mu_{q-1}\times
1+\bp_K^2\ \subseteq\ N(M_u/K)\enspace. 
\end{equation}
Furthermore, the Artin map $\theta$ of $K$ yields the isomorphism:
\begin{equation}\label{IndexOfNormGroup}
K^{\times}/ N(M_u/K)\cong \Gal(M_u/K)\cong C_p\enspace.
\end{equation}
We introduce the $\Z_p$-module $Z$ defined by:
$$Z=\{x\in\bo_K:Tr_{K/\Q_p}(x)\in p\Z_p\}\enspace.$$
Note that $Z\varsubsetneq\bo_K$ since $K/\Q_p$ is unramified. Further 
$\bp_K\subseteq Z$, but $Z$ needs not be an ideal of $\bo_K$. In fact
the next result implies that $[\bo_K:Z]=p$, whereas
$[\bo_K:\bp_K]=p^d$, hence $\bp_K\varsubsetneq Z$.

We recall from above that as $p\ge 3$ and $K/\Q_p$ is unramified, for
any $n\in\mathbb{N}$ the power series $\exp$ and $\log$ give us group
isomorphisms between $1+\bp_K^n$ with the multiplicative structure and
$\bp_K^n$ with the additive structure.

\begin{theorem}\label{norm_thm}
Let $v\in\PP(k)$ then
$$N(M_{v^{1-p}}/K)=\langle\,p\,\rangle\times\mu_{q-1}\times\exp(pv^{-p}Z)\enspace.$$
As a consequence, the set $\{M_{v^{1-p}}:v\in\PP(k)\}$ gives a
complete set of intermediate fields of $M_{p,2}/K$ of degree $p$ over
$K$. Further let $v,v_1\not=v_2\in\PP(k)$, then $M_{v^{1-p}}\subset
M_{v_1^{1-p}}M_{v_2^{1-p}}$ if and only if $v$ belongs to the line of
$\PP(k)$ through $v_1$ and $v_2$. 
\end{theorem}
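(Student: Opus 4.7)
\medskip
\noindent\textbf{Proof sketch.}

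My plan is to pin down $N(M_u/K)$, where $u=v^{1-p}$, by a single norm computation, then read off both corollaries from hyperplane duality in $k$ with respect to the trace pairing. From local class field theory one has $N(M_u/K)\supseteq N(M_{p,2}/K)=\langle p\rangle\times\mu_{q-1}\times(1+\bp_K^2)$ with index $p^{d-1}$, so identifying $N(M_u/K)$ amounts to singling out one $\F_p$-hyperplane of $(1+\bp_K)/(1+\bp_K^2)\cong k$. First I would verify that the candidate subgroup has index $p$ in $K^\times$: since $Tr_{K/\Q_p}$ reduces modulo $p$ to the non-degenerate trace $Tr_{k/\F_p}$, one has $[\bo_K:Z]=p$; the exponential isomorphism $\bp_K\to 1+\bp_K$ sends $pv^{-p}Z$ to $\exp(pv^{-p}Z)$, and the trivial inclusion $p\bo_K\subseteq Z$ forces $1+\bp_K^2\subseteq\exp(pv^{-p}Z)$. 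It thus suffices to show that the two subgroups have the same image in $(1+\bp_K)/(1+\bp_K^2)\cong k$.

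The crucial step is to compute the norm of $1+a\omega_u^{p-1}$ for $a\in\bo_K$. By Proposition \ref{Mu_lemma}, $\omega_u^{p-1}$ has minimal polynomial $\psi_u(X)=X(X+up)^{p-1}+p$ over $K$, and expanding the product $\prod_i(1+a\beta_i)$ over its roots via elementary symmetric functions yields
$$N_{M_u/K}(1+a\omega_u^{p-1})=(1-aup)^{p-1}-a^pp\enspace.$$
A routine expansion modulo $p^2$ shows that the image of this element in $(1+\bp_K)/(1+\bp_K^2)\cong k$ equals $\wb a\wb u-\wb a^p$. Substituting $\wb u=\wb v^{1-p}$ and setting $b=\wb a\wb v$ factors this as $\wb v^{-p}(b-b^p)$. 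The $\F_p$-linear endomorphism $b\mapsto b-b^p$ of $k$ has kernel $\F_p$ (the fixed points of Frobenius), hence image of $\F_p$-dimension $d-1$, and since $Tr_{k/\F_p}(b-b^p)=0$ for every $b\in k$, that image is precisely $\ker Tr_{k/\F_p}$. As $a$ ranges over $\bo_K$, the norms above therefore cover the whole hyperplane $\wb v^{-p}\ker Tr_{k/\F_p}\subset k$, which coincides with the image of $\exp(pv^{-p}Z)$. Combining this with the index computation above gives the desired equality $N(M_u/K)=\langle p\rangle\times\mu_{q-1}\times\exp(pv^{-p}Z)$.

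For the two remaining assertions I would work with the hyperplanes $H_w=\wb w^{-p}\ker Tr_{k/\F_p}$. The number of degree-$p$ subextensions of $M_{p,2}/K$ equals $(q-1)/(p-1)=|\PP(k)|$, so it suffices to check that $v\mapsto M_{v^{1-p}}$ is injective: $H_{v_1}=H_{v_2}$ forces $(\wb{v_1}/\wb{v_2})^{-p}$ to stabilize $\ker Tr_{k/\F_p}$, and only $\F_p^\times$-scalars stabilize a hyperplane of the field $k$ (by non-degeneracy of the trace form), so $(\wb{v_1}/\wb{v_2})^p\in\F_p^\times$; since $p$-th powering is a bijection of $k^\times$ fixing $\F_p^\times$, this forces $\wb{v_1}/\wb{v_2}\in\F_p^\times$. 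Finally, $M_{v^{1-p}}\subseteq M_{v_1^{1-p}}M_{v_2^{1-p}}$ translates via local class field theory to $H_v\supseteq H_{v_1}\cap H_{v_2}$; passing to orthogonals under the trace pairing and using $(\ker Tr_{k/\F_p})^\perp=\F_p$ (so $H_w^\perp=\F_p\wb w^p$), this becomes $\wb v^p\in\F_p\wb{v_1}^p+\F_p\wb{v_2}^p$, and applying the $\F_p$-linear inverse of Frobenius gives exactly the condition that $\wb v$ lies on the line of $\PP(k)$ through $\wb{v_1}$ and $\wb{v_2}$. The main hurdle is the explicit norm computation combined with the algebraic trick $b=\wb a\wb v$, $\wb a\wb u-\wb a^p=\wb v^{-p}(b-b^p)$, which is what makes the trace condition surface from what looks, a priori, like an arbitrary identity.
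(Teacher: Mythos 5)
Your argument is correct, and its skeleton matches the paper's: establish the inclusion of the candidate group in $N(M_u/K)$ via an explicit norm computation with $\psi_u$, check that the candidate has index $p$ in $K^\times$ using $[\bo_K:Z]=p$, and then derive both consequences from non-degeneracy of the trace pairing on $k$. The differences are in the execution, and they are genuine. First, your closed-form resultant identity $N_{M_u/K}(1+a\omega_u^{p-1})=(1-aup)^{p-1}-a^pp$ for arbitrary $a\in\bo_K$, followed by the substitution $b=\wb a\wb v$ and the observation that the Artin--Schreier map $b\mapsto b-b^p$ has image exactly $\ker Tr_{k/\F_p}$, replaces the paper's two lemmas: an explicit normal-basis decomposition $Z=p\eta\Z_p\oplus\bigoplus_{i}(\eta^{p^i}-\eta^{p^{i+1}})\Z_p$ together with a norm computation for each generator $w=\eta^{p^i}$. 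Your route is more intrinsic (no choice of $\eta$); the paper's decomposition is not wasted, though, since it is reused later (in the corollary identifying a generator of $\Gal(M_u/K)$ and in Section 5). Second, for the ``line'' criterion you pass to orthogonal complements under the trace form, turning $H_v\supseteq H_{v_1}\cap H_{v_2}$ into $\wb v^p\in\F_p\wb v_1^p+\F_p\wb v_2^p$ and then applying the inverse of Frobenius; this yields the equivalence in one stroke, whereas the paper proves only the easy implication directly and obtains the converse by counting the $p+1$ degree-$p$ subextensions of the compositum against the $p+1$ points of the line. Both treatments are complete; yours is slightly more economical on that last point.
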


\begin{remark}\label{remark}
Before proving this theorem we address a question posed in
\cite[Remark 4.4]{PickettVinatier}. Namely, it was stated that it
would be desirable to describe a generating set for the degree $p$
extensions of $K'$ contained in $K_{p,2}$ that does not depend on the
choice of a basis of $k$ over $\Z_p/p\Z_p$. This theorem, together
with Theorem \ref{KummerGenerator}, shows that
$\{\mathcal{E}_{v^{1-p},2}(v):v\in\mathbb{P}(k)\}$ is such a set. 
\end{remark}

\begin{proof}
Set $u=v^{1-p}$, so $u$ is well defined in $(\mu_{q-1})^{p-1}$.
We already know by equality (\ref{SubgroupOfNormGroup}) that
$\langle\,p\,\rangle\times\mu_{q-1}\subseteq N(M_{u}/K)$.
We first show that $\exp(pv^{-p}Z)\subseteq N(M_{u}/K)$, then that 
$\langle\,p\,\rangle\times\mu_{q-1}\times\exp(pv^{-p}Z)$ is of index $p$
in $K^\times$. This is enough to obtain equality using
(\ref{IndexOfNormGroup}). 

We let $\eta\in\mu_{q-1}$ be such that $\eta\mod p$ generates a normal
basis of $k/\F_p$. It yields the following descriptions of $\bo_K$ and
$Z$. 
\begin{lemma}\label{decompositions}
One has the decompositions:
$\bo_K=\eta\Z_p\oplus\bigoplus_{i=1}^{d-1}(\eta^{p^i}-\eta^{p^{i+1}})\Z_p$ 
and $Z=p\eta\Z_p\oplus\bigoplus_{i=1}^{d-1}(\eta^{p^i}-\eta^{p^{i+1}})\Z_p$.
\end{lemma}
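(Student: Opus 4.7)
The plan is to exhibit both decompositions as coming from a single change of basis on $\bo_K$ seen through the reduction mod $p$ and the trace map.

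First I would set up the ambient $\Z_p$-basis of $\bo_K$. Since $\eta \bmod p$ generates a normal basis of $k/\F_p$, its Galois conjugates $\eta, \eta^p, \ldots, \eta^{p^{d-1}}$ reduce to an $\F_p$-basis of $k$; because $K/\Q_p$ is unramified, any family in $\bo_K$ whose reduction is an $\F_p$-basis of $k$ is automatically a $\Z_p$-basis of $\bo_K$ (Nakayama). Thus $\{\eta^{p^i}\}_{i=0}^{d-1}$ is a $\Z_p$-basis of $\bo_K$.

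Next I would pass from this basis to $\{e_0,\dots,e_{d-1}\}$ with $e_0=\eta$ and $e_i=\eta^{p^i}-\eta^{p^{i+1}}$ for $1\le i\le d-1$ (using $\eta^{p^d}=\eta$ since $\eta\in\mu_{q-1}$ and $q=p^d$). The transition matrix has $1$'s on the diagonal, a single $-1$ in the superdiagonal pattern coming from the telescoping, and one extra $-1$ in position $(0,d-1)$ coming from $e_{d-1}=\eta^{p^{d-1}}-\eta$. Expanding the determinant along the first column reduces to a lower-bidiagonal $(d-1)\times(d-1)$ block with $1$'s on its diagonal, so the determinant equals $1$, a unit in $\Z_p$. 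This proves the first decomposition.

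For the second decomposition, I would compute traces. Since $\sigma(\eta)=\eta^p$, the elements $\eta^{p^i}$ are all $\Sigma$-conjugate, so $Tr_{K/\Q_p}(\eta^{p^i})=Tr_{K/\Q_p}(\eta)$ for every $i$; therefore $Tr_{K/\Q_p}(e_i)=0$ for $1\le i\le d-1$ and $Tr_{K/\Q_p}(e_0)=Tr_{K/\Q_p}(\eta)$. The key point I would verify is that $Tr_{K/\Q_p}(\eta)\in\Z_p^\times$: its reduction in $\F_p$ is the trace of $\bar\eta$ in $k/\F_p$, and if this were zero we would obtain a non-trivial $\F_p$-linear relation $\sum_i\bar\eta^{p^i}=0$, contradicting the fact that $\bar\eta$ generates a normal basis. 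Given this unit trace, an arbitrary element $x=c_0e_0+\sum_{i=1}^{d-1}c_ie_i$ of $\bo_K$ satisfies $Tr_{K/\Q_p}(x)=c_0Tr_{K/\Q_p}(\eta)$, which lies in $p\Z_p$ if and only if $c_0\in p\Z_p$. Hence $Z$ is precisely $p\eta\Z_p\oplus\bigoplus_{i=1}^{d-1}e_i\Z_p$, as claimed.

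The whole argument is essentially linear algebra; the only non-routine point — and the step I would treat most carefully — is the verification that $Tr_{K/\Q_p}(\eta)$ is a $p$-adic unit, since that is exactly where the normal-basis hypothesis on $\eta$ is used (and without it the second decomposition can fail).
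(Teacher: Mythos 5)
Your proof is correct and takes essentially the same route as the paper's: the same change of basis $(\eta,\eta^{p}-\eta^{p^2},\ldots,\eta^{p^{d-1}}-\eta)$ justified by a unit determinant, followed by the observation that the telescoping elements have trace zero while $Tr_{K/\Q_p}(\eta)$ is a $p$-adic unit because $\overline{Tr_{K/\Q_p}(\eta)}=Tr_{k/\F_p}(\bar\eta)\neq 0$ by the normal-basis hypothesis. The only (harmless) difference is that you expand a general element directly in the new basis, whereas the paper starts from the conjugate basis $\{\eta^{p^i}\}$ and telescopes.
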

\begin{proof}
Since $K/\Q_p$ is unramified, the set of conjugates of $\eta$ under
$\Sigma$ forms a basis $(\eta^{p^i})_{0\le i\le d-1}$ for $\bo_K$ over
$\Z_p$. A determinant computation shows that $(\eta
,\eta^{p}-\eta^{p^{2}},\eta^{p^2}-\eta^{p^{3}},\ldots,\eta^{p^{d-1}}-\eta)$
is also a basis of $\bo_K$ over $\Z_p$, hence the decomposition of
$\bo_K$. Any element of
$\bigoplus_{i=1}^{d-1}(\eta^{p^i}-\eta^{p^{i+1}})\Z_p$ has zero trace
(this is in fact the kernel of the trace restricted to $\bo_K$), so
$Z\supseteq
p\eta\Z_p\oplus\bigoplus_{i=1}^{d-1}(\eta^{p^i}-\eta^{p^{i+1}})\Z_p$. Suppose 
$x=\sum_{i=0}^{d-1}x_i\eta^{p^i}\in Z$, where each $x_i\in\Z_p$, then
$Tr_{K/\Q_p}(x)=\big(\sum_{j=0}^{d-1}x_j\big)Tr_{K/\Q_p}(\eta)\in
p\Z_p$. 

Let $\wb y$ denote the reduction modulo $p\bo_K$ of $y\in\bo_K$,
then
$\overline{Tr_{K/\Q_p}(\eta)}=Tr_{k/\F_p}(\overline{\eta})\not=0$,
since $\wb\eta$ generates a normal basis, hence $p$ divides
$\sum_{j=0}^{d-1}x_j$. Therefore,
$$\textstyle
x=\Big(\sum\limits_{j=0}^{d-1}x_j\Big)\eta
+\sum\limits_{i=1}^{d-1}\Big(\sum\limits_{j=1}^ix_j\Big)(\eta^{p^i}-\eta^{p^{i+1}})
\displaystyle\ \in\ 
p\eta\Z_p\oplus\bigoplus_{i=1}^{d-1}(\eta^{p^i}-\eta^{p^{i+1}})\Z_p
$$
as required.
\end{proof}

We get 
$\exp(pv^{-p}Z)=\exp(p^2v^{-p}\eta\Z_p)\times\prod_{i=1}^{d-1}\exp\big(pv^{-p}(\eta^{p^i}-\eta^{p^{i+1}})\Z_p\big)$. Further 
$p^2v^{-p}\eta\Z_p\subseteq\bp_K^2$ hence
$\exp(p^2v^{-p}\eta\Z_p)\subseteq N(M_u/K)$ thanks to
(\ref{SubgroupOfNormGroup}). The inclusion $\exp(pv^{-p}Z)\subseteq
N(M_u/K)$ will thus be a consequence of the next result. 
\begin{lemma}
 Let $w\in k^\times$, then the multiplicative subgroup of $1+\bp_K$
 given by $\exp\big(pv^{-p}(w-w^p)\Z_p\big)$ is a subgroup of
 $N(M_{u}/K)$.  
\end{lemma}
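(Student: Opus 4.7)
\medskip\noindent\textbf{Proof plan.} The plan is to find a single explicit element of $M_u^\times$ whose norm to $K$ equals $\exp(pv^{-p}(w-w^p))$ modulo $1+\bp_K^2$, and then to use that $1+\bp_K^2\subseteq N(M_u/K)$ by (\ref{SubgroupOfNormGroup}) together with the fact that $N(M_u/K)$ is closed to extend to the full $\Z_p$-closure of the cyclic group. The degenerate case $w\in\F_p^\times$ is vacuous since then the Teichmüller lift satisfies $w^p=w$.

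\medskip\noindent
First I would work with $\beta=\omega_u^{p-1}$, which by Proposition~\ref{Mu_lemma} lies in $M_u=K(\beta)$ and has minimal polynomial $\psi_u(X)=X(X+up)^{p-1}+p$. Since $\psi_u$ is Eisenstein, $\beta$ is a uniformiser of $M_u$, so $1+x\beta\in 1+\bp_{M_u}$ for any $x\in\bo_K$. The key computation is the norm formula
\[
N_{M_u/K}(1+x\beta)=\prod_i(1+x\beta_i)=-x^p\psi_u(-1/x)=(1-upx)^{p-1}-px^p,
\]
valid for $x\in\bo_K^\times$, where the product runs over the conjugates $\beta_i$ of $\beta$. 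Because $K/\Q_p$ is unramified, $\bp_K^2=p^2\bo_K$, and expanding $(1-upx)^{p-1}$ by the binomial theorem one sees that all terms of degree $\ge2$ in $upx$ lie in $p^2\bo_K$, while $-(p-1)upx\equiv upx\pmod{p^2\bo_K}$. Therefore
\[
N_{M_u/K}(1+x\beta)\ \equiv\ 1+upx-px^p\ =\ 1+px(u-x^{p-1})\pmod{\bp_K^2}.
\]

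\medskip\noindent
Next I would specialise $x=v^{-1}w$, where $v,w$ are the Teichmüller lifts of the corresponding residue classes. Using $u=v^{1-p}=v^{-(p-1)}$, the factor simplifies exactly (not merely mod $\bp_K$):
\[
x(u-x^{p-1})\ =\ v^{-1}w\cdot v^{-(p-1)}(1-w^{p-1})\ =\ v^{-p}(w-w^p).
\]
Hence $N_{M_u/K}(1+v^{-1}w\beta)\equiv 1+pv^{-p}(w-w^p)\pmod{\bp_K^2}$. On the other hand, since $pv^{-p}(w-w^p)\in\bp_K$ and $p\ge 3$ with $K/\Q_p$ unramified, the tail of the exponential series lies in $\bp_K^2$, so
\[
\exp\bigl(pv^{-p}(w-w^p)\bigr)\ \equiv\ 1+pv^{-p}(w-w^p)\pmod{\bp_K^2}.
\]

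\medskip\noindent
Comparing the two congruences and using $1+\bp_K^2\subseteq N(M_u/K)$, I obtain $\exp(pv^{-p}(w-w^p))\in N(M_u/K)$. Finally, since $N(M_u/K)$ is a closed subgroup of $K^\times$ and $n\mapsto\exp\bigl(pv^{-p}(w-w^p)\cdot n\bigr)$ is a continuous homomorphism $\Z_p\to 1+\bp_K$ whose image on $\Z$ already lies in $N(M_u/K)$, the entire $\Z_p$-closure $\exp(pv^{-p}(w-w^p)\Z_p)$ is contained in $N(M_u/K)$. The main obstacle is really the bookkeeping in the norm computation — in particular arranging the choice $x=v^{-1}w$ so that the elementary factor $x(u-x^{p-1})$ matches $v^{-p}(w-w^p)$ on the nose — everything else is forced by closedness and the elementary description of $\exp$ modulo $\bp_K^2$.
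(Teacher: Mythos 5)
Your proof is correct and follows essentially the same route as the paper: both compute $N_{M_u/K}(1+wv^{-1}\omega_u^{p-1})$ from the Eisenstein polynomial $\psi_u$, reduce modulo $\bp_K^2$ to match $\exp\big(pv^{-p}(w-w^p)\big)\equiv 1+pv^{-p}(w-w^p)$, and finish using $1+\bp_K^2\subseteq N(M_u/K)$ together with the passage from the cyclic group to its $\Z_p$-closure. The only (cosmetic) differences are that you package the norm as the closed-form identity $(1-upx)^{p-1}-px^p$ and dispense with the auxiliary unit factor $-\tfrac{w}{v}$ that the paper inserts, which if anything makes the bookkeeping cleaner.
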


\begin{proof}
Let $x\in\Z_p$, we then have 
$$\exp\big(pv^{-p}(w-w^p)x\big)
\equiv1+pv^{-p}(w-w^p){x}\mod\bp_K^2\enspace$$
and as subgroups of $1+\bp_K$,
$$\exp\big(pv^{-p}(w-w^p)\Z_p\big)(1+\bp_K^2)
=\big\langle 1+pv^{-p}(w-w^p)\big\rangle(1+\bp_K^2)\enspace.$$

We know that $1+\bp_K^2$ is a subgroup of
$N(M_{u}/K)$, therefore it is now sufficient to show
that some element congruent to $1+pv^{-p}(w-w^p)\mod\bp_K^2$ belongs to
$N(M_{u}/K)$. We see that 
$$\textstyle\psi_{u}\left(\frac{X-1}{wv^{-1}}\right)
=\frac{X-1}{wv^{-1}}\left(\frac{X-1}{wv^{-1}}+up\right)^{p-1}+p$$
and therefore the norm of $1+\omega_{u}^{p-1}wv^{-1}$ will equal
$$\textstyle\frac{-v}{w}\left(\frac{-v}{w}+up\right)^{p-1}+p
\equiv-(\frac{v}{w})^p+p\big(1-\frac{1}{w^{p-1}}\big)\mod\bp_K^2\enspace.$$
We then see that 
\begin{eqnarray*}
\textstyle N_{M_{u}/K}\big(-\frac{w}{v}(1+\omega_{u}^{p-1}wv^{-1})\big)
&=&\textstyle -(\frac{w}{v})^pN_{M_{u}/K}(1+\omega_{u}^{p-1}wv^{-1})\\
&\equiv& 1+pv^{-p}(w-w^p)\mod \bp_K^2\enspace,
\end{eqnarray*}
which proves the result.
\end{proof}

We now compute the index $I$ of
$\langle\,p\,\rangle\times\mu_{q-1}\times\exp(pv^{-p}Z)$ in
$K^\times=\langle\,p\,\rangle\times\mu_{q-1}\times 1+\bp_K$. One has: 
$$I=[\exp(\bp_K):\exp(pv^{-p}Z)]=[\bp_K:pv^{-p}Z]=[\bo_K:Z]=p\enspace,$$
using the decompositions in Lemma \ref{decompositions}. This completes
the norm group computation.

The extension $M_{v^{1-p}}$ of $K$ only depends on the class of
$v\in\mu_{q-1}$ modulo $\mu_{p-1}$, so the set
$\{M_{v^{1-p}}:v\in\mu_{q-1}\}$ contains at most $\frac{q-1}{p-1}$
extensions of $K$. This is the number of subextensions of $M_{p,2}/K$
of degree $p$, so we are left with showing that
$M_{v^{1-p}}\not=M_{w^{1-p}}$ whenever $v\not=w$ in $\PP(k)$. This is
equivalent to showing that the norm groups of these extensions are
different, namely that $v^{-p}Z\not=w^{-p}Z$ whenever $v\not=w$ in
$\PP(k)$.

For $x\in\bo_K$, we denote by $\wb x$ its reduction modulo $p\bo_K$.  
\begin{lemma}\label{KernelTrace}
Let $v\in\mu_{q-1}$, then 
$$v^{-p}Z=\{x\in\bo_K:Tr_{k/\F_p}(v^{p}\wb x)=0\}\enspace,$$
so the reduction of $v^{-p}Z$ modulo $p$ is the kernel of the linear
form $y\mapsto Tr_{k/\F_p}(v^{p}y)$ on the $\F_p$-vector space $k$. 
\end{lemma}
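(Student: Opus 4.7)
The plan is to rewrite the defining condition for $v^{-p}Z$ and then use the compatibility of the trace with reduction modulo $p$, which holds because $K/\Q_p$ is unramified.

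First, I would observe that $x\in v^{-p}Z$ if and only if $v^p x\in Z$, that is, if and only if $Tr_{K/\Q_p}(v^p x)\in p\Z_p$. This is immediate from the definition of $Z$, together with the fact that $v^p\in\bo_K^\times$, so multiplication by $v^p$ is a bijection on $\bo_K$.

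Next, I would invoke the standard fact that for an unramified extension $K/\Q_p$, the trace map $Tr_{K/\Q_p}\colon\bo_K\to\Z_p$ reduces modulo $p$ to the residue field trace $Tr_{k/\F_p}\colon k\to\F_p$; explicitly, for any $y\in\bo_K$ one has $\overline{Tr_{K/\Q_p}(y)}=Tr_{k/\F_p}(\bar y)$ in $\F_p$. Applying this with $y=v^p x$ and using that $v\in\mu_{q-1}$ is the Teichm\"uller lift of its own reduction, so $\overline{v^p x}=v^p\bar x$ in $k$ (under our identification), the condition $Tr_{K/\Q_p}(v^p x)\in p\Z_p$ becomes $Tr_{k/\F_p}(v^p\bar x)=0$, which is exactly the claimed description of $v^{-p}Z$.

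The second assertion is then a direct consequence: since $v^{-p}Z\supseteq p\bo_K$ (as $p\bo_K\subseteq\bp_K\subseteq Z$ and $v^{-p}\in\bo_K^\times$), the set $v^{-p}Z$ is a union of residue classes modulo $p\bo_K$, and by the first part its reduction modulo $p$ is precisely the kernel of the $\F_p$-linear form $y\mapsto Tr_{k/\F_p}(v^p y)$ on $k$. There is no real obstacle here; the only point requiring any attention is the trace-reduction compatibility, which is standard for unramified extensions and can be justified either from the fact that the minimal polynomial of a generator of $\bo_K/\Z_p$ reduces to the minimal polynomial of its image in $k/\F_p$, or from functoriality of the trace combined with $\bo_K\otimes_{\Z_p}\F_p\cong k$.
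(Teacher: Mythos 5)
Your proof is correct and follows essentially the same route as the paper's: rewrite $x\in v^{-p}Z$ as $Tr_{K/\Q_p}(v^px)\in p\Z_p$ and use the compatibility $\overline{Tr_{K/\Q_p}(y)}=Tr_{k/\F_p}(\bar y)$ for the unramified extension $K/\Q_p$ (a fact the paper has already invoked in the proof of Lemma~\ref{decompositions}). Your extra justification of the second assertion via $p\bo_K\subseteq v^{-p}Z$ is a welcome, if routine, addition to what the paper leaves implicit.
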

\begin{proof}
Let $x\in\bo_K$, then $x\in v^{-p}Z$ if and only if
$\overline{Tr_{K/\Q_p}(v^{p}x)}=Tr_{k/\F_p}(v^{p}\wb x)=0$, which
yields the result.
\end{proof}
The trace form $T(\nu,\nu')=Tr_{k/\F_p}(\nu\nu')$ is a non degenerate
bilinear pairing on $k\times k$, so it induces an isomorphism
$\nu\mapsto T(\nu,\,.\,)$ between $k$ and its dual. It follows
that, for $\nu,\nu'\in k^\times$, $T(\nu,\,.\,)$ and $T(\nu',\,.\,)$
have the same kernel if and only if $\nu/\nu'\in\F_p^\times$. Suppose
$v\not=w$ in $\PP(k)$ then the same holds for $v^{p}$ and $w^{p}$,
so $v^{-p}Z$ and $w^{-p}Z$ are different modulo $p$, hence are different.
It follows that $\{M_{v^{1-p}}:v\in\PP(k)\}$ is the complete set of
subextensions of degree $p$ of $M_{p,2}/K$.

By local class field theory one has $M_{v^{1-p}}\subset
M_{v_1^{1-p}}M_{v_2^{1-p}}$ if and only if $N(M_{v^{1-p}}/K)\supset
N(M_{v_1^{1-p}}/K)\cap N(M_{v_2^{1-p}}/K)$, namely if and only if $v^{-p}Z\supset
v_1^{-p}Z\cap v_2^{-p}Z$, where with a slight abuse of notation we now
see $v,v_1,v_2$ as elements of $\mu_{q-1}$ (since any of their
liftings can equally be chosen). By Lemma \ref{KernelTrace}, this is
also equivalent to
$$\forall w\in k\,,\quad Tr_{k/\F_p}(v_1^pw)=0=
Tr_{k/\F_p}(v_2^pw)\ \Rightarrow\ Tr_{k/\F_p}(v^pw)=0\enspace.$$
This implication is clearly true if there exist $a,b\in\F_p$ such that  
$v=av_1+bv_2$, namely if $v\pmod{\mu_{p-1}}$ belongs 
to the line of $\PP(k)$ through $v_1\pmod{\mu_{p-1}}$ and
$v_2\pmod{\mu_{p-1}}$. Since there are exactly $p+1$ points on this
line, which all correspond to a different subextension of 
$M_{v_1^{1-p}}M_{v_2^{1-p}}$ of degree $p$, and that there are exactly
$p+1$ such subextensions, we see that $M_{v^{1-p}}\subset
M_{v_1^{1-p}}M_{v_2^{1-p}}$ implies that $v$ belongs to the line of
$\PP(k)$ through $v_1$ and $v_2$. We have therefore proved the
declared equivalence.

This ends the proof of Theorem \ref{norm_thm}.
\end{proof}

\begin{corollary}
Let $v\in\PP(k)$ and set $u=v^{1-p}$, then
$$\Gal(M_u/K)
=\langle\,\theta_{M_u/K}(\exp(\eta v^{-p}p))\,\rangle
=\langle\,\theta_{M_u/K}(1+\eta v^{-p}p)\,\rangle
\enspace.$$  
\end{corollary}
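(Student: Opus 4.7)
The plan is to combine Theorem \ref{norm_thm} with local class field theory: since $\theta_{M_u/K}$ induces an isomorphism $K^\times/N(M_u/K)\cong\Gal(M_u/K)\cong C_p$, the first equality will follow once one exhibits that $\exp(\eta v^{-p}p)\notin N(M_u/K)$, and a comparison modulo $\bp_K^2$ will then yield the second equality.

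First I would observe that $\exp(\eta v^{-p}p)\in 1+\bp_K$, and that the direct decomposition $K^\times=\langle p\rangle\times\mu_{q-1}\times(1+\bp_K)$ reduces membership in $N(M_u/K)=\langle p\rangle\times\mu_{q-1}\times\exp(pv^{-p}Z)$ to whether $\exp(\eta v^{-p}p)\in\exp(pv^{-p}Z)$. Since $\exp$ is a bijection from $\bp_K$ onto $1+\bp_K$, this is equivalent to asking whether $\eta\in Z$, that is, whether $Tr_{k/\F_p}(\wb\eta)=0$ in $\F_p$. The key and essentially only subtle step is that the normal basis condition on $\wb\eta$ forces $Tr_{k/\F_p}(\wb\eta)\neq 0$: by assumption the conjugates $\wb\eta,\wb\eta^p,\ldots,\wb\eta^{p^{d-1}}$ are $\F_p$-linearly independent in $k$, hence their sum, which is exactly $Tr_{k/\F_p}(\wb\eta)$, cannot vanish. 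This is the same observation already used implicitly in the proof of Lemma \ref{decompositions}. It follows that $\eta\notin Z$, so $\exp(\eta v^{-p}p)\notin N(M_u/K)$ and its image under $\theta_{M_u/K}$ generates $\Gal(M_u/K)$.

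For the second equality one expands the exponential series to obtain $\exp(\eta v^{-p}p)\equiv 1+\eta v^{-p}p\pmod{\bp_K^2}$. Consequently the two elements differ by a factor in $1+\bp_K^2$, which is contained in $N(M_{p,2}/K)\subseteq N(M_u/K)$ by (\ref{SubgroupOfNormGroup}); their images in $\Gal(M_u/K)$ therefore coincide, which gives the claimed alternative generator.
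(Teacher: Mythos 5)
Your proof is correct and follows essentially the same route as the paper: both arguments reduce the first equality to the fact that $\eta\notin Z$, i.e.\ that $Tr_{k/\F_p}(\wb\eta)\neq 0$ because $\wb\eta$ generates a normal basis, and both obtain the second equality from $\exp(\eta v^{-p}p)\equiv 1+\eta v^{-p}p\pmod{\bp_K^2}$ together with $1+\bp_K^2\subseteq N(M_u/K)$. The only cosmetic difference is that the paper invokes the direct-sum decomposition of Lemma \ref{decompositions} to identify $K^\times/N(M_u/K)$ with $\exp(\eta v^{-p}p\Z_p)/\exp(\eta v^{-p}p^2\Z_p)$, whereas you check non-membership of the single element $\exp(\eta v^{-p}p)$ in the norm group and use that the quotient is cyclic of prime order $p$.
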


\begin{proof}
Restricting the elements in $\Gal(K^{ab}/K)$ to their action on $M_u$
gives the short exact sequence: 
$$1\rightarrow N(M_u/K)\rightarrow
K^{\times}\stackrel{\theta_{M_u/K}}\longrightarrow
\Gal(M_u/K)\rightarrow 1\enspace.$$ 
Using the decompositions in Lemma \ref{decompositions} we see that 
$$K^{\times}/N(M_{u}/K)\cong\exp(\eta v^{-p}p\Z_p)/\exp(\eta v^{-p}p^2\Z_p)$$
which proves the first equality. The second equality comes from the 
fact that $\langle \exp(\eta v^{-p}p) \rangle(1+\bp_K^2)=\langle
1+\eta v^{-p}p \rangle(1+\bp_K^2)$ and $(1+\bp_K^2)\subseteq
N(M_{u}/K)$.
\end{proof}


\section{The absolute extension $M_{p,2}/\Q_p$}\label{non-ab_weak_section}

\begin{proposition}
Suppose $[K:\Q_p]>1$, then $M_{p,2}/\Q_p$ is a non-abelian weakly
ramified extension of local fields.
\end{proposition}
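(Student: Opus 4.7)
I would split the argument into three parts: Galois, weakly ramified, non-abelian.

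First, for Galois-ness: the Lubin--Tate field $K_{p,2}$ depends only on the uniformizer $p\in K$, which is fixed by $\Sigma=\Gal(K/\Q_p)$, so $K_{p,2}/\Q_p$ is Galois. Now $M_{p,2}$ is characterized inside $K_{p,2}/K$ as the unique subextension of degree $q$ over $K$ (corresponding to the unique subgroup of order $q-1$, namely the prime-to-$p$ part $\mu_{q-1}$ of $\Gal(K_{p,2}/K)$). Since any $\sigma\in\Sigma$ lifts to an automorphism of $\bar\Q_p$ fixing $K_{p,2}$ and preserving degrees, we obtain $\sigma(M_{p,2})=M_{p,2}$, hence $M_{p,2}/\Q_p$ is Galois.

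Second, for weak ramification I would invoke the standard compatibility of the lower-numbering filtration with a normal subgroup: for $E/F/\Q_p$ with $E/\Q_p$ Galois and $\Gal(E/F)\triangleleft\Gal(E/\Q_p)$, one has $G_i(E/F)=G_i(E/\Q_p)\cap\Gal(E/F)$ for all $i\geq-1$ (see \cite[IV \S1 Prop. 2]{serre}). Apply this with $E=M_{p,2}$ and $F=K$: since $K/\Q_p$ is unramified and $M_{p,2}/K$ totally ramified, the inertia subgroup of $\Gal(M_{p,2}/\Q_p)$ coincides with $\Gal(M_{p,2}/K)$; in particular $G_2(M_{p,2}/\Q_p)\subseteq\Gal(M_{p,2}/K)$, so the intersection is simply $G_2(M_{p,2}/\Q_p)=G_2(M_{p,2}/K)=\{1\}$ by Theorem~\ref{weakly-ramified}.

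Third, for non-abelianness I would consider the short exact sequence
$$1\longrightarrow\Gal(M_{p,2}/K)\longrightarrow\Gal(M_{p,2}/\Q_p)\longrightarrow\Sigma\longrightarrow1\enspace.$$
If the middle group were abelian, $\Sigma$ would act trivially by conjugation on the normal subgroup $\Gal(M_{p,2}/K)$. By the Galois-equivariance of the local Artin map, this conjugation action corresponds to the natural action of $\Sigma$ on
$$K^\times/N(M_{p,2}/K)\cong(1+\bp_K)/(1+\bp_K^2)\cong\bp_K/\bp_K^2\cong k\enspace,$$
via $\log$ and division by $p$. This natural action is that of $\Gal(k/\F_p)$ on $k$, which is faithful and therefore non-trivial as soon as $d=[K:\Q_p]>1$, yielding a contradiction.

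The main obstacle is the third step, specifically the identification of the conjugation action on $\Gal(M_{p,2}/K)$ with the natural $\Sigma$-action on $k$; this rests on the Galois-equivariance of the reciprocity map, which is standard but must be invoked carefully when chaining together the isomorphisms above.
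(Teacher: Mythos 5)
Your proof is correct, and although the overall decomposition (Galois, weakly ramified, non-abelian) matches the paper's, two of your three steps take genuinely different routes. For Galois-ness the paper exhibits $M_{p,2}$ as the splitting field over $K$ of the Eisenstein polynomial $\bar H(X)=X(X+p)^{q-1}+p$, whose coefficients lie in $\Z_p$; your argument via the $\Sigma$-invariance of the uniformiser $p$ (equivalently, of the norm group $N(M_{p,2}/K)$) together with the uniqueness of the degree-$q$ subextension is precisely the alternative the paper itself mentions in the remark following its proof. The weak ramification step is the same in substance: the paper simply says ``since $K/\Q_p$ is unramified, it follows from Theorem \ref{weakly-ramified}'', where you spell out the lower-numbering compatibility $H_i=G_i\cap H$ of \cite[IV \S1 Prop.~2]{serre} and the identification of $\Gal(M_{p,2}/K)$ with the inertia subgroup. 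The real divergence is the non-abelian step: the paper imports an external classification result (Th\'eor\`eme 1.1 of \cite{Vinatier_jnumb}), by which an abelian wildly and weakly ramified extension of $\Q_p$ would have ramification index $p$, contradicting $e(M_{p,2}/\Q_p)=p^d>p$; you instead compute the conjugation action of $\Sigma$ on $\Gal(M_{p,2}/K)\cong(1+\bp_K)/(1+\bp_K^2)\cong k$ via the Galois-equivariance of the Artin map and observe that it is the faithful Frobenius action on $k$, nontrivial once $d>1$. Your argument is self-contained and in fact anticipates the paper's own later relation $\wt\sigma\circ g_i=g_{i+1}\circ\wt\sigma$ in the wreath-product theorem of Section \ref{non-ab_weak_section}, whereas the paper's citation is shorter but relies on a nontrivial outside result. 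The equivariance of the reciprocity map that you flag as the delicate point is standard and is exactly what underlies that later computation, so there is no gap.
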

\begin{proof}
We first check that $M_{p,2}$ is Galois over $\Q_p$.
The polynomial $h(X)=X^q+pX$ is a Lubin-Tate polynomial over $\bo_K$
for uniformising parameter $p$. The extension $K_{p,2}/K$ is abelian,
of degree $q(q-1)$ and contains all the roots of the polynomial
$h(h(X))$. The polynomial  
$$H(X)=h(h(X))/h(X)=(X^q+pX)^{q-1}+p$$
is Eisenstein, and so irreducible, of degree $q(q-1)$ and therefore
$K_{p,2}/K$ is the splitting field of $H(X)$. With a straightforward
rearrangement we see that  
$$H(X)=X^{q-1}(X^{q-1}+p)+p=\bar{H}(X^{q-1})\enspace,$$ 
where $\bar{H}(X)=X(X+p)^{q-1}+p$ is also Eisenstein, hence
irreducible, and of degree $q$. As $M_{p,2}$ is the unique subfield of
$K_{p,2}$ of degree $q$ over $K$ it must be the splitting field of
$\bar H(X)$.

Both $H$ and $\bar H$ have coefficients invariant under the
action of $\Gal(K/\Q_p)$, therefore the extensions $K_{p,2}/\Q_p$ and
$M_{p,2}/\Q_p$ must be Galois: let $\omega$ denote a root of $H$, so
that $K_{p,2}=K(\omega)$, and let $\wt\sigma$ be a lifting of $\sigma$
in the Galois group of the Galois closure of $K_{p,2}$ over $\Q_p$, then
$H\big(\wt\sigma(\omega)\big)=\wt\sigma\big(H(\omega)\big)=0$. But
$K_{p,2}$ splits $H$, so $\wt\sigma(\omega)\in K_{p,2}$, which
therefore is normal over $\Q_p$. The same argument applies to
$M_{p,2}$. 

Since $K/\Q_p$ is unramified, it nows follows from Proposition
\ref{weakly-ramified} that $M_{p,2}/\Q_p$ is weakly ramified. If
$d=[K:\Q_p]>1$, $M_{p,2}/\Q_p$ can not be abelian since otherwise its
ramification index would equal $p$ by \cite[Th\'eor\`eme
  1.1]{Vinatier_jnumb}, and $M_{p,2}/K$ is totally ramified of degree
$p^d$.
\end{proof}
The fact that $M_{p,2}/\Q_p$ is Galois can also be shown by
considering that $N(M_{p,2}/K)$ is invariant under
$\Gal(K/\Q_p)$.

Let $\eta\in\mu_{q-1}$ be such that $\eta\mod p$ is a normal basis
generator of $k/\F_p$. Since $K/\Q_p$ is unramified, the set of 
conjugates of $\eta$ under $\Sigma$ forms a basis $(\eta^{p^i})_{0\le
  i\le d-1}$ for $\bo_K$ over $\Z_p$. This means that:
$$\bp_K=p\bo_K=p\bigoplus_{i=0}^{d-1}\eta^{p^i}\Z_p
=\bigoplus_{i=0}^{d-1}\eta^{p^i}p\Z_p\enspace.$$ 
\begin{lemma}
One has: $(1+\bp_K)/(1+\bp_K^2)=
\left(\prod\limits_{i=0}^{d-1}\langle1+\eta^{p^i}p\rangle\right)\Big/(1+\bp_K^2)
\enspace.$ 
\end{lemma}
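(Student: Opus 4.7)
The plan is to reduce the statement to an additive computation via the standard isomorphism between $(1+\bp_K)/(1+\bp_K^2)$ and $\bp_K/\bp_K^2$, and then use the $\Z_p$-basis of $\bp_K$ given just above the lemma.

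First I would verify that the map $\varphi\colon (1+\bp_K)/(1+\bp_K^2)\to \bp_K/\bp_K^2$ sending $1+x\bmod(1+\bp_K^2)$ to $x\bmod\bp_K^2$ is a well-defined group isomorphism: if $x,y\in\bp_K$, then $(1+x)(1+y)=1+(x+y)+xy$ with $xy\in\bp_K^2$, which makes $\varphi$ a homomorphism; injectivity and surjectivity are clear. (Equivalently, this is the linearisation provided by $\log$ and $\exp$ on these small discs, using $p\geq 3$ and $K/\Q_p$ unramified, exactly as in the proof of Proposition \ref{GaloisGroups}.)

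Next, since $\bp_K=p\bo_K=\bigoplus_{i=0}^{d-1}\eta^{p^i}p\Z_p$ as stated just before the lemma, and since $\bp_K^2=p^2\bo_K=p\bp_K$, I would observe that
$$\bp_K/\bp_K^2\ =\ \bigoplus_{i=0}^{d-1}\eta^{p^i}p\Z_p\,/\,\eta^{p^i}p^2\Z_p\ =\ \bigoplus_{i=0}^{d-1}\F_p\,\overline{\eta^{p^i}p}\,,$$
so that the classes of $\eta^{p^i}p$ for $i=0,\dots,d-1$ form an $\F_p$-basis of $\bp_K/\bp_K^2$; in particular they generate this group.

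Pulling this back through $\varphi^{-1}$, the classes of $1+\eta^{p^i}p$ generate $(1+\bp_K)/(1+\bp_K^2)$, which is precisely the claimed equality. There is no real obstacle here: the only point requiring a line of care is checking that $\varphi$ is a homomorphism, i.e.\ that cross-terms lie in $\bp_K^2$; the rest is bookkeeping with the $\Z_p$-basis $(\eta^{p^i})_{0\le i\le d-1}$ of $\bo_K$.
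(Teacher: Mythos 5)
Your proof is correct and rests on the same idea as the paper's: the paper verifies directly that any $y=1+p\sum_i y_i\eta^{p^i}$ is congruent to $\prod_i(1+p\eta^{p^i})^{y_i}$ modulo $1+\bp_K^2$, which is exactly the linearisation $(1+x)(1+y)\equiv 1+x+y$ that you package as the isomorphism $(1+\bp_K)/(1+\bp_K^2)\cong\bp_K/\bp_K^2$. Both arguments then conclude from the $\Z_p$-basis $(\eta^{p^i})_{0\le i\le d-1}$ of $\bo_K$, so this is essentially the paper's proof in a slightly more structural wrapping.
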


\begin{proof}
The inclusion ``$\supseteq$'' is clear, so let us prove ``$\subseteq$''.
In view of the above decomposition of $\bp_K$, any $y\in 1+\bp_K$ can
be written as $y=1+p\sum_{i=0}^{d-1}y_i\eta^{p^i}$, with 
$y_i\in\Z_p$, so 
$y\equiv\prod_{i=0}^{d-1}(1+py_i\eta^{p^i})
\equiv\prod_{i=0}^{d-1}(1+p\eta^{p^i})^{y_i}\mod p^2$. Thus there
exists $a\in\bo_K$ such that 
$$\prod_{i=0}^{d-1}(1+p\eta^{p^i})^{y_i}=y+p^2a=y(1+p^2ay^{-1})\enspace,$$
and the result follows.
\end{proof}
Note that $\eta^{p^i}$ only depends on $i$ modulo $d$, so we let
$g_i=\theta_{M_{p,2}/K}(1+ \eta^{p^i} p)$ for $i\in\Z/d\Z$. As
the Galois group of $M_{p,2}/K$ is isomorphic, via the Artin map, to
$(1+\bp_K)/(1+\bp_K^2)$, we get: 
\begin{equation}
\Gal(M_{p,2}/K)=\langle\, g_{i}:
i\in\Z/d\Z\,\rangle=\prod_{i\in\Z/d\Z}\langle g_i\rangle\enspace.
\end{equation}
This gives an explicit description of the isomorphism
$\Gal(M_{p,2}/K)\cong (C_p)^d$.

We now describe explicitly the Galois group of $M_{p,2}/\Q_p$.
We let $\wt{\sigma}\in\Gal(K_{p,2}/\Q_p)$ be some lifting of $\sigma$,
\textit{i.e.}, $\wt{\sigma}|_{K}=\sigma$. Let again $\omega$ be a root
of $H(X)$, we then have $K_{p,2}=K(\omega)$ and
$M_{p,2}=K(\omega^{q-1})$, so $\wt\sigma$ is determined by its value
on $\omega$. We have seen that $\wt\sigma(\omega)$ is one of the
roots of $H$. It follows that taking the value at $\omega$
yields a one-to-one correspondence between the liftings of $\sigma$
and the roots of $H$. Consequently, we can pick $\wt{\sigma}$ such
that $\wt{\sigma}(\omega)=\omega$.

\begin{theorem}
Let $\wt\sigma\in\Gal(K_{p,2}/\Q_p)$ be such that
$\wt{\sigma}(\omega)=\omega$ and $\wt{\sigma}|_{K}=\sigma$, and
consider its restriction, denoted the same, to $M_{p,2}$. With the
definitions above, 
$$\Gal(M_{p,2}/\Q_p)
=\langle\,\wt{\sigma},\ g_{i}: i\in\Z/d\Z\,\rangle\enspace.$$
Further, $\wt{\sigma} \circ g_{i}=g_{{i+1}}\circ\wt{\sigma}$ and 
$\Gal(M_{p,2}/\Q_p)$ is isomorphic to the regular wreath product
$C_p\wr C_d$. 
\end{theorem}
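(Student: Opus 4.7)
The plan is to work with the short exact sequence
$$1 \to \Gal(M_{p,2}/K) \to \Gal(M_{p,2}/\Q_p) \to \Sigma \to 1,$$
and show that it splits via $\wt\sigma$, with the conjugation action of $\wt\sigma$ on the kernel realising the cyclic permutation that defines the regular wreath product.

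First I would establish that $\wt\sigma$ has order exactly $d$ in $\Gal(M_{p,2}/\Q_p)$. Since $K_{p,2}=K(\omega)$, any automorphism of $K_{p,2}/\Q_p$ is determined by its restriction to $K$ together with its value at $\omega$. By construction $\wt\sigma|_K=\sigma$ has order $d$, so $\wt\sigma^d$ fixes $K$; moreover $\wt\sigma(\omega)=\omega$, so $\wt\sigma^d(\omega)=\omega$. Hence $\wt\sigma^d=\mathrm{id}$ on $K_{p,2}$, and a fortiori on $M_{p,2}$, while $\wt\sigma$ cannot have smaller order since its restriction to $K$ has order $d$. This immediately gives the splitting $\Gal(M_{p,2}/\Q_p)=\Gal(M_{p,2}/K)\rtimes\langle\wt\sigma\rangle$, and together with the already-established generation of the kernel by the $g_i$, also gives $\Gal(M_{p,2}/\Q_p)=\langle\wt\sigma,\,g_i:i\in\Z/d\Z\rangle$.

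Next I would prove the commutation relation $\wt\sigma\circ g_i=g_{i+1}\circ\wt\sigma$ by invoking the functoriality of the local Artin map with respect to field automorphisms: for every $x\in K^\times$,
$$\wt\sigma\circ\theta_{M_{p,2}/K}(x)\circ\wt\sigma^{-1}=\theta_{M_{p,2}/K}(\sigma(x)).$$
This is a standard property of local class field theory, valid because $M_{p,2}/\Q_p$ is Galois (Proposition 4.1). Applying it to $x=1+\eta^{p^i}p$ and using $\sigma(\eta)=\eta^p$, hence $\sigma(1+\eta^{p^i}p)=1+\eta^{p^{i+1}}p$, gives $\wt\sigma g_i\wt\sigma^{-1}=g_{i+1}$, which is the announced relation.

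Finally, these relations show that the action by conjugation of $\langle\wt\sigma\rangle\cong C_d$ on $\prod_{i\in\Z/d\Z}\langle g_i\rangle\cong(C_p)^d$ is precisely the cyclic permutation of the $d$ direct factors. Together with the splitting established above, this is the defining presentation of the regular wreath product $C_p\wr C_d=(C_p)^d\rtimes C_d$; a count of orders ($d\cdot p^d=[M_{p,2}:\Q_p]$) confirms there is no extra relation. The main subtlety is the equivariance of the Artin map, which must be cited carefully, and the verification that $\wt\sigma$ really has order $d$ (not a proper multiple) — both handled cleanly by the choice $\wt\sigma(\omega)=\omega$.
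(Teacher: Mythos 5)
Your proof is correct, and for the key commutation relation it takes a genuinely different route from the paper. The paper works entirely inside the Lubin--Tate machinery: it writes $\wt g_i=\theta_{K_{p,2}/K}(1+\eta^{p^i}p)$, uses the explicit formula $\wt g_i(\omega)=[1-\eta^{p^i}p](\omega)$ in terms of the Lubin--Tate power series attached to $h(X)=X^q+pX$, and then computes $\wt\sigma\circ\wt g_i(\omega)=[1-\sigma(\eta^{p^i})p](\wt\sigma(\omega))=[1-\eta^{p^{i+1}}p](\omega)$, using $\wt\sigma(\omega)=\omega$ and the fact that $\wt\sigma$ acts on the coefficients of the power series through $\sigma$; it then checks the relation separately on $\eta$ and concludes on $M_{p,2}=\Q_p(\omega^{q-1},\eta)$, finally comparing cardinalities for the wreath product. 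You instead invoke the equivariance of the local Artin map under field isomorphisms, $\wt\sigma\circ\theta_{M_{p,2}/K}(x)\circ\wt\sigma^{-1}=\theta_{M_{p,2}/K}(\sigma(x))$, which is legitimate here precisely because $M_{p,2}/\Q_p$ is Galois, and which yields $\wt\sigma g_i\wt\sigma^{-1}=g_{i+1}$ in one line from $\sigma(\eta)=\eta^p$. Your approach is cleaner and makes transparent that the conjugation action on the abelian kernel is independent of the choice of lift (so $\wt\sigma(\omega)=\omega$ is only needed to get a splitting of order exactly $d$, a point you isolate correctly and which the paper leaves implicit in its cardinality count); the cost is that you must cite the functoriality of reciprocity as a black box, whereas the paper's computation is self-contained given the Lubin--Tate formalism it has already set up, and it exhibits the generators concretely as power-series substitutions, which is in keeping with the explicit spirit of the rest of the paper. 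Both arguments arrive at the same relation (note the paper's final displayed relation $g_i\circ\wt\sigma=\wt\sigma\circ g_{i+1}$ is an indexing slip relative to its own derivation and to the theorem statement; your version matches the statement).
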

For a full definition of wreath products see, for example,
\cite[Ch.7]{Rotman_groups}. 
\begin{proof}
We first let $\wt{g}_i=\theta_{K_{p,2}/K}(1+\eta^{p^i}p)$ and note that 
$g_i=\wt{g}_i|_{M_{p,2}}$. We know that $1+\bp_K^2\subseteq N(K_{p,2}/K)$, 
and so $\theta_{K_{p,2}/K}(1+\eta^{p^i}p)=\theta_{K_{p,2}/K}((1+\eta^{p^i}p)a)$ 
for any $a\in 1+\bp_K$. Clearly
$(1+\eta^{p^i}p)^{-1}\equiv(1-\eta^{p^i}p)\mod (1+\bp_K^{2})$, so from
standard local class field theory we have 
$$\wt{g}_{i}(\omega)
=\theta_{K_{p,2}/K}(1+{\eta^{p^i}}p)(\omega)
=[1-{\eta^{p^i}}p](\omega)
\enspace,$$ 
where $[1-{\eta^{p^i}}p](X)$ is the Lubin-Tate power series
associated to the polynomial $h(X)$ and the unit
$(1-\eta^{p^i}p)\in\bo_K^{\times}$ (see \cite[\S3.3
  Prop.2]{serre-lubintate} and \cite[\S3.4 Theorem
  3(c)]{serre-lubintate}). Considering the power series expansion of 
$[1-{\eta^{p^i}}p](X)$ we see that 
\begin{eqnarray*}
\wt{\sigma}\circ \wt g_{i}(\omega)
&=&\wt{\sigma}\big([1-{\eta^{p^i}}p](\omega)\big)\\ 
&=&[1-{\sigma}({\eta^{p^i}})p]\big(\wt{\sigma}(\omega)\big)\\
&=&[1-{\eta^{p^{i+1}}}p](\omega)\\
&=&\wt g_{i+1}(\omega)\\
&=&\wt g_{i+1}\circ\wt{\sigma}(\omega)\enspace.
\end{eqnarray*}
We must therefore also have $\wt{\sigma}\circ
g_i(\omega^{q-1})=g_{i+1}\circ\wt{\sigma}(\omega^{q-1})$. 

As $g_i$ acts trivially on $\eta$ for all $i$, we clearly have 
$g_i\circ\wt{\sigma}(\eta)=\wt{\sigma}\circ g_{i+1}(\eta)$. 
It now follows that $g_i\circ\wt{\sigma}=\wt{\sigma}\circ g_{i+1}$, as 
$M_{p,2}=\Q_p(\omega^{q-1},\eta)$. 

Finally we obtain the isomorphism $\Gal(M_{p,2}/\Q_p)\cong C_p\wr C_d$
by comparing cardinalities. 
\end{proof}

We finish our study of $K_{p,2}/\Q_p$ by describing the action of the
element $\wt{\sigma}\in\Gal(K_{p,2}/\Q_p)$ on the subfields $L_u$ and
$M_u$. 

\begin{proposition}\label{prop:galois-action} 
With $v\in\PP(k)$ and $u=v^{1-p}$, we have
$$\wt{\sigma}(L_u)=L_{\sigma(u)} \text{\ \ \  and\ \ \ }
\wt{\sigma}(M_u)=M_{\sigma(u)}\enspace.$$ 
Moreover, $M_u/\Q_p$ is Galois if and only if $\sigma(u)=u^p=u$.
\end{proposition}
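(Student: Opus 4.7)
The plan is to deduce the first two equalities from the fact that $\wt\sigma$ transforms the defining polynomial of $\omega_u$ (respectively $\omega_u^{p-1}$) over $K$ into the defining polynomial of $\omega_{\sigma(u)}$ (respectively $\omega_{\sigma(u)}^{p-1}$), and then to read off the Galois condition from the classification of Theorem \ref{norm_thm}.

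First I would observe that $\omega_u$ has minimal polynomial $P_u(X)=(X^p+upX)^{p-1}+p\in\bo_K[X]$ over $K$ (as noted after Definition~2.2). Since $\wt\sigma|_K=\sigma$, applying $\wt\sigma$ coefficientwise sends $P_u$ to $P_{\sigma(u)}$, so $\wt\sigma(\omega_u)$ is a root of $P_{\sigma(u)}$, i.e.\ a $K$-conjugate of $\omega_{\sigma(u)}$. The group $(\mu_{q-1})^{p-1}$ is stable under $\sigma$, so $\sigma(u)\in(\mu_{q-1})^{p-1}$ and Theorem~\ref{KummerGenerator} ensures $L_{\sigma(u)}/K$ is Galois; hence $\wt\sigma(\omega_u)\in L_{\sigma(u)}$, giving $\wt\sigma(L_u)\subseteq L_{\sigma(u)}$. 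Equality follows because the two fields have the same degree $p(p-1)$ over $K$. The exact same argument applied to the minimal polynomial $\psi_u(X)=X(X+up)^{p-1}+p$ of $\omega_u^{p-1}$ from Proposition~\ref{Mu_lemma} yields $\wt\sigma(M_u)=M_{\sigma(u)}$.

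For the Galois criterion, since $M_u/K$ is already Galois (abelian of degree $p$), $M_u/\Q_p$ is Galois if and only if every lift of $\sigma$ to $\Gal(\bar\Q_p/\Q_p)$ stabilises $M_u$. Any two such lifts differ by an element of $\Gal(\bar\Q_p/K)$, which stabilises $M_u$; so the condition reduces to $\wt\sigma(M_u)=M_u$, i.e.\ $M_{\sigma(u)}=M_u$ by the previous step. By Theorem~\ref{norm_thm}, the assignment $v\mapsto M_{v^{1-p}}$ is a bijection from $\PP(k)$ onto the set of degree $p$ subextensions of $M_{p,2}/K$, so $M_{\sigma(u)}=M_u$ is equivalent to $\sigma(v)$ and $v$ defining the same class in $\PP(k)$.

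Finally I would translate this back to $u$. For $u,v\in\mu_{q-1}$ identified via Teichmüller with residues, $\sigma$ acts as the $p$-th power, so $\sigma(u)=u^p$ (the first equality in the statement is automatic) and $\sigma(v)=v^p$. The condition $\sigma(v)\equiv v\pmod{\mu_{p-1}}$ becomes $v^{p-1}\in\mu_{p-1}$, i.e.\ $v^{(p-1)^2}=1$; raising $u=v^{1-p}$ to the $p$-th power and comparing gives $u^p=u\iff v^{(p-1)^2}=1$. Hence $M_u/\Q_p$ is Galois iff $\sigma(u)=u^p=u$, as claimed. The only mildly delicate point is the reduction from $\Q_p$-Galois to $\sigma$-stability (which requires noting that different lifts of $\sigma$ act identically on the $K$-Galois field $M_u$); the rest is bookkeeping between $u$ and $v$ using Theorem~\ref{norm_thm}.
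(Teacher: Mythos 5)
Your proof is correct and follows essentially the same route as the paper: transport the minimal polynomials of $\omega_u$ and $\omega_u^{p-1}$ under $\wt\sigma$ to obtain $\wt\sigma(L_u)=L_{\sigma(u)}$ and $\wt\sigma(M_u)=M_{\sigma(u)}$, then reduce the Galois condition to $\wt\sigma$-stability of $M_u$ and conclude via the injectivity statement of Theorem \ref{norm_thm}. The only cosmetic differences are that the paper carries out the reduction to $\wt\sigma$-stability using the explicit generators of $\Gal(M_{p,2}/\Q_p)$ rather than lifts of $\sigma$ in $\Gal(\bar\Q_p/\Q_p)$, and that you make explicit the $u$--$v$ bookkeeping ($u^{p-1}=v^{-(p-1)^2}$) that the paper leaves implicit.
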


\begin{proof}
We recall that $L_u=K(\omega_u)$ is the splitting field of the
minimum polynomial of $\omega_u$ over $K$, $(X^p+upX)^{p-1}+p$. We
have  
$$(\omega_u^p+up\omega_u)^{p-1}+p=0\enspace,$$ 
and so 
$$(\wt{\sigma}(\omega_u)^p+\sigma(u)p\wt{\sigma}(\omega_u))^{p-1}+p=0\enspace.$$
This means that $\wt{\sigma}(\omega_u)$ is a root of
$(X^p+\sigma(u)pX)^{p-1}+p$ and as $L_{\sigma(u)}$ is the splitting
field of this polynomial, we must have
$L_{\sigma(u)}=K(\wt{\sigma}(\omega_u))=\wt{\sigma}(L_u)$. The fact
that $\wt{\sigma}(M_u)=M_{\sigma(u)}$ follows from the same reasoning
using $\omega_u^{p-1}$ and its minimum polynomial in the place of
$\omega_u$. 

As $M_{p,2}/\Q_p$ is Galois and $M_u\subseteq M_{p,2}$, all embeddings
of $M_u$ into $\bar{\Q}_p$ will be obtained by restricting the action
of elements of $\Gal(M_{p,2}/\Q_p)$ to $M_u$. We have seen that
$\Gal(M_{p,2}/\Q_p)=\langle\,\wt{\sigma},\ g_{i}: i\in\Z/d\Z\,\rangle$.

For all $i$, $g_i\in\Gal(M_{p,2}/K)$, and so $g_i|_{M_u}$ is an
element of $\Gal(M_u/K)\subseteq\Gal(M_{u}/\Q_p)$, by Galois
theory. Therefore, $M_u/\Q_p$ is Galois if and only if 
$M_u=\wt{\sigma}(M_u)$. We have just shown that
$\wt{\sigma}(M_u)=M_{\sigma(u)}$ and from Theorem \ref{norm_thm} we
know that $M_u=M_{\sigma(u)}$ if and only if $u=\sigma(u)$ which
completes the proof.  
\end{proof}
\begin{remark}
We know by Theorem \ref{norm_thm} that the map $v\mapsto
v^{1-p}$ is a one-to-one correspondence from $\PP(k)$ onto the set
$\mathcal M$ of degree $p$ extensions of $K$ contained in
$K_{p,2}$. We have noticed earlier that $\sigma$ acts on $\PP(k)$; let
it act on $\mathcal M$ through $\wt\sigma$, then it follows from
Proposition \ref{prop:galois-action} that the above bijection commutes
with the action of $\sigma$.
\end{remark}

It seems natural to ask whether
$\wt{\sigma}(\alpha_u)=\alpha_{\sigma(u)}$, with suitable choices of 
$\omega_u$ and possibly a different choice of $\wt{\sigma}$. However,
we are not able to answer this question at present.


\section{Exponential power series and differential Frobenius structures}\label{Exp_section}

We now discuss how Pulita's method in \cite{Pulita} can be modified to
give further results concerning differential modules with Frobenius
structure. For each Lubin-Tate formal group over $\Q_p$, Pulita
constructs a set of exponential power series and proves that all
exponentials in this set are over-convergent if and only if the formal
group is isomorphic to the multiplicative formal group (see
\cite[Theorem 2.1]{Pulita}), \textit{i.e.}, the formal group is
associated to the uniformising parameter $p$. 

In this section we describe Pulita's main results used in the
construction of his exponential power series, omitting some technical
details which are described fully in \cite{Pulita}. We describe how a
different choice of Witt vector in this construction allows us to
relax the condition that the Lubin-Tate formal group must be
isomorphic to the multiplicative formal group in order for each of the
associated exponentials to be over-convergent. We then describe how
these exponentials can be used to describe differential modules with a
Frobenius structure. 

These results were found during our investigation of exponential power
series from the point of view of Galois module structure, and we hoped
that they would describe some kind of link between these two subject
areas. However, we have not achieved this elusive goal. 
Indeed, the only power series from this section that can be used
directly in the constructions in Section \ref{Gal_mod_section} is
$E_{1,2}(X)$ --- see Theorem \ref{thm:power-series}. It has the very
special property that $E_{1,2}(u)$ is a primitive $p^2$th root of
unity for $u\in\mu_{p-1}$, and so $E_{1,2}(X)$ only gives us
information about cyclotomic extensions. In order to obtain Galois
module generators in extensions not obtained from cyclotomy, we had to
modify the power series $E_{1,2}(X)$ as in Section
\ref{Gal_mod_section}. These modified power series then lose the
properties that we use to endow a differential module with a Frobenius
structure in this section. Using the methods here, apart from in the
cyclotomic case, the power series considered can not give information
about both differential Frobenius structure and integral Galois module
structure.  


\subsection{Pulita exponentials arising from different Witt vectors} 
 
\begin{definition}
The $n$th Witt polynomial is defined as 
 $$\mathcal{W}_{(n)}(X_0,\ldots,X_n)
 =X_0^{p^n}+pX_1^{p^{n-1}}+\ldots+p^nX_n\enspace.$$ 
For any ring $R$, the ring of Witt vectors over $R$, denoted $W(R)$,
is equal as a set to $R^{\mathbb{N}\cup\{0\}}$ and endowed with the
ring structure such that the map  
$$\begin{array}{cccc}
\mathcal{W}:&W(R)&\longrightarrow&R^{\mathbb{N}\cup\{0\}}\\ 
&(r_0,r_1,\ldots)&\longmapsto&(\mathcal{W}_{(0)}(r_0),\mathcal{W}_{(1)}(r_0,r_1),\ldots)
\end{array}$$
is a ring homomorphism. For $r=(r_0,r_1,\ldots)\in{W}(R)$, we define  
$$r^{(n)}=\mathcal{W}_{(n)}(r_0,r_1,\ldots,r_n)
\text{\ \ \ and\ \ \ }\mathcal{W}(r)=<r^{(0)},r^{(1)},\ldots>$$
to be the $n$th ghost component and the ghost vector of $r$
respectively. 
\end{definition}
See, for example, \cite{Fesenko-Vostokov} for full details.

For $u\in\Z_p^{\times}$ we let $f_u(X)\in\Z_p[X]$ be such that 
$$f_u(X)\equiv X^p \mod p\Z_p[X] 
\text{\ \ \ \ and\ \ \ \ } 
f_u(X)\equiv upX\mod X^2\Z_p[X]$$
and let $\{\omega_{u,i}\}_{i\ge0}$ be a coherent set of roots
associated to $f_u(X)$. Note that $\Q_p(\omega_{u,n})=(\Q_p)_{up,n}$  
is the $n$th Lubin-Tate extension of $\Q_p$ with respect to the
uniformising parameter $up$. 
 
\begin{definition}
We define 
$$[\ \ ]:\Z_p[[X]]\rightarrow W(\Z_p[[X]])$$
to be the unique ring homomorphism such that for $h(X)\in\Z_p[[X]]$,
$[h(X)]$ is the unique Witt vector over $\Z_p[[X]]$ whose ghost vector
is equal to  
$$<h(X),h(f_u(X)),h(f_u(f_u(X))),\ldots>.\enspace$$ 

\end{definition}
See \cite[Lemma 2.1]{Pulita} for a proof of existence and uniqueness.

Following Pulita, for any finite extension $L/\Q_p$ and any $x\in
\bp_L$, we then denote $[h(x)]=[h(X)]|_{X=x}\in W(\bo_L)$.
 
\begin{remark} 
If we take $h(X)=X$ and $x=\omega_{u,n}$ in the above definition, then
$[\omega_{u,n}]\in W(\bo_{K(\omega_{u,n})})$ is the unique Witt vector
with ghost vector 
$$<\omega_{u,n},\omega_{u,n-1},\ldots,\omega_{u,1},0,0,\ldots>\enspace.$$
\end{remark} 

We denote by
$E(X)=\exp\left(X+\frac{X^p}{p}+\frac{X^{p^2}}{p^2}+\cdots\right)$
the Artin-Hasse exponential and recall that $E(X)\in 1+X\Z_p[[X]]$
(see, for example, \cite[I\S9]{Fesenko-Vostokov}). 
\begin{definition}
With $L$ a finite extension of $\Q_p$, let
$\lambda=(\lambda_0,\lambda_1,\ldots)\in W(\bo_L)$. The Artin-Hasse
exponential relative to $\lambda$ is defined as 
$$E(\lambda,X):
=\prod_{i\ge0}E(\lambda_iX^{p^i})
=\exp\left(
\lambda^{(0)}X+\lambda^{(1)}\frac{X^p}{p}+\lambda^{(2)}\frac{X^{p^2}}{p^2}+\ldots
\right)
\enspace.$$ 
\end{definition}
We note that the integrality of the Artin-Hasse exponential and the
fact that $\lambda\in W(\bo_L)$ imply that
$E(\lambda,X)\in1+X\bo_L[[X]]$. Further, we observe that for
$u\in\Z_p^{\times}$  
$$E([\omega_{u,n}]\lambda,X)=\exp
\left(
\omega_{u,n}\lambda^{(0)}X
+\omega_{u,n-1}\lambda^{(1)}\frac{X^p}{p}
+\ldots
+\omega_{u,1}\lambda^{(n-1)}\frac{X^{p^{n-1}}}{p^{n-1}}
\right)
\enspace.$$

We now state two lemmas from \cite{Pulita} that we need to prove our
result. 
\begin{lemma}\label{Pul_key_lemma}
Let $L$ be a finite extension of $\Q_p$. Let $h(X)=\sum_{i\geq 0}a_i
X^i\in \Z_p[[X]]$ and let $[h(x)]=(\lambda_0,\lambda_1,\ldots)\in
W(\bo_L)$ with $x\in\bp_L$ (\textit{i.e.}, $|x|_p<1$). Then, 
$$|a_0|_p=|p|_p^r\text{\ \ \ \ if and only if \ \ \ \ \ }
|\lambda_0|_p,\ldots,|\lambda_{r-1}|_p<1 
\text{\ \ and\ \ } |\lambda_r|_p=1\enspace.$$
Moreover, if $a_0=0$, then $|\lambda_r|_p<1$ for all $r$.
\end{lemma}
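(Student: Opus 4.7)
The heart of the argument is the interplay between the ghost components $\lambda^{(n)}=h(f_u^{\circ n}(x))$ and the Witt components $\lambda_n$, via the identity $\lambda^{(n)}=\sum_{i=0}^{n}p^{i}\lambda_{i}^{p^{n-i}}$, combined with the fact that iterating $f_u$ drives any $x\in\bp_L$ to zero.

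First, I would establish the dynamical lemma: for every $x\in\bp_L$, $|f_u^{\circ n}(x)|_p\to 0$ as $n\to\infty$. Writing $f_u(X)=upX+X^{2}g(X)$ with $g\in\Z_p[X]$, the ultrametric inequality gives $|f_u(y)|_p\le\max(|p|_p|y|_p,|y|_p^{2})<|y|_p$ for every $y\in\bp_L$. The resulting positive decreasing sequence $|f_u^{\circ n}(x)|_p$ cannot have a positive limit (a short case split according as the limit is $\le|p|_p$ or $>|p|_p$ shows that either linear contraction by $|p|_p$ or quadratic contraction forces a further drop). Plugging into $\lambda^{(n)}=h(f_u^{\circ n}(x))=a_0+\sum_{i\ge 1}a_i\,f_u^{\circ n}(x)^{i}$ and using $|a_i|_p\le 1$ gives $|\lambda^{(n)}-a_0|_p\to 0$, so $|\lambda^{(n)}|_p$ is eventually equal to $p^{-r}$ when $r<\infty$, and $|\lambda^{(n)}|_p\to 0$ when $a_0=0$.

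Second, I would prove the forward implication by induction on $i\le r$. Assume $|\lambda_0|_p,\dots,|\lambda_{i-1}|_p<1$. For $n$ sufficiently large, each term with $j<i$ contributes $p^{-j}|\lambda_j|_p^{p^{n-j}}$, which tends to $0$ as $n\to\infty$; each term with $j>i$ contributes at most $p^{-j}\le p^{-i-1}$; and the $j=i$ term contributes exactly $p^{-i}|\lambda_i|_p^{p^{n-i}}$. If $i<r$ then $|\lambda_i|_p=1$ would force $|\lambda^{(n)}|_p=p^{-i}>p^{-r}$, a contradiction, hence $|\lambda_i|_p<1$. If $i=r$ the only summand that can reach norm $p^{-r}$ is $j=i$, which forces $|\lambda_r|_p=1$. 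The ``moreover'' part follows from the same induction: assuming $|\lambda_i|_p=1$ after $|\lambda_0|_p,\dots,|\lambda_{i-1}|_p<1$ yields $|\lambda^{(n)}|_p=p^{-i}$ for all large $n$, contradicting $|\lambda^{(n)}|_p\to 0$.

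The reverse implication reduces to the same dominant-term analysis: under the stated hypotheses on the $\lambda_i$, the sum $\sum_{i=0}^n p^i\lambda_i^{p^{n-i}}$ has norm exactly $p^{-r}$ for all sufficiently large $n$, so passing to the limit $n\to\infty$ gives $|a_0|_p=p^{-r}$. The main obstacle I anticipate is the first (dynamical) step: verifying $|f_u^{\circ n}(x)|_p\to 0$ must simultaneously handle the linear regime $|y|_p\le|p|_p$ and the quadratic regime $|y|_p>|p|_p$ and rule out any nonzero limit. Once this convergence is in hand, the Witt-component estimates are a purely algebraic induction driven by the ghost-component identity.
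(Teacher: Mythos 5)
Your argument is correct. Note, however, that the paper does not prove this lemma at all: its ``proof'' is the single line ``\cite[Lemma 2.2]{Pulita}'', i.e.\ a citation of Pulita's original result. What you have written is therefore a self-contained reconstruction rather than a parallel to anything in the text, and it is a sound one. The two pillars are exactly right: (i) the dynamical fact that $f_u^{\circ n}(x)\to 0$ for $x\in\bp_L$ (your bound $|f_u(y)|_p\le\max(|p|_p|y|_p,|y|_p^2)=|y|_p\max(|p|_p,|y|_p)$ in fact gives geometric decay with ratio $\max(|p|_p,|x|_p)<1$ directly, so the case split on the putative limit can be avoided), which yields $\lambda^{(n)}=h(f_u^{\circ n}(x))\to a_0$ and hence $|\lambda^{(n)}|_p=|a_0|_p$ for $n$ large when $a_0\ne 0$; and (ii) the dominant-term analysis of $\lambda^{(n)}=\sum_{j=0}^{n}p^{j}\lambda_j^{p^{n-j}}$, where the induction on $j$, the decay of the terms with $|\lambda_j|_p<1$, and the uniform bound $p^{-j}\le p^{-i-1}$ for $j>i$ (using $\lambda_j\in\bo_L$) combine via the ultrametric inequality to pin down which $\lambda_j$ is the first unit. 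Both directions and the ``moreover'' clause follow as you describe. This is, in spirit, the same ghost-component argument as Pulita's, so nothing is gained or lost relative to the source; the only practical remark is that within this paper the lemma is meant to be quoted, not reproved.
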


\begin{proof} 
\cite[Lemma 2.2]{Pulita}.
\end{proof}

\begin{lemma}\label{Pul_thm2.2}
Let $\lambda=(\lambda_0,\lambda_1,\ldots)\in W(\bo_L)$, then for
$u\in\Z_p^{\times}$, $E([\omega_{u,n}]\lambda,X)$ is over convergent
if and only if $|\lambda_0|_p,\ldots,|\lambda_{n-1}|_p<1$. 
\end{lemma}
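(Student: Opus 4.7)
My plan is to adapt Pulita's proof of \cite[Theorem 2.2]{Pulita}, which treats the case $u=1$, to arbitrary $u\in\Z_p^\times$. The key observation is that $|u|_p=1$, so $up$ is a uniformising parameter of $\Z_p$ with $|up|_p=|p|_p$, and hence the $p$-adic valuations of the coherent set of roots $\omega_{u,i}$ coincide with those in Pulita's case, namely $v_p(\omega_{u,i})=1/((p-1)p^{i-1})$, since $\omega_{u,i}$ is a uniformiser of $(\Q_p)_{up,i}$.

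First I would express $E([\omega_{u,n}]\lambda,X)$ concretely. Since multiplication of Witt vectors is componentwise on ghost vectors, and the ghost vector of $[\omega_{u,n}]$ is $\langle\omega_{u,n},\omega_{u,n-1},\ldots,\omega_{u,1},0,0,\ldots\rangle$, all ghost components of $[\omega_{u,n}]\lambda$ with index $\ge n$ vanish. Although the product
$\prod_{i\ge 0}E(([\omega_{u,n}]\lambda)_iX^{p^i})$ is genuinely infinite as a product, its exponential form --- which depends only on the ghost vector --- collapses to the finite expression
\[
E([\omega_{u,n}]\lambda,X)
=\exp\Big(\sum_{i=0}^{n-1}\omega_{u,n-i}\,\lambda^{(i)}\frac{X^{p^i}}{p^i}\Big).
\]
This reduces the over-convergence question to the over-convergence of the exponential of a polynomial in $X$.

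Next I would study $\exp$ applied to the polynomial $P(X)=\sum_{i=0}^{n-1}c_iX^{p^i}$ with $c_i=\omega_{u,n-i}\lambda^{(i)}/p^i$. The classical fact that $\exp(Y)$ converges iff $v_p(Y)>1/(p-1)$, together with a standard valuation estimate for $P(x)$ on a disc of radius slightly greater than $1$, pins down over-convergence as equivalent to $v_p(c_i)>1/(p-1)$ for each $i$. Substituting $v_p(\omega_{u,n-i})=1/((p-1)p^{n-i-1})$ translates this into an explicit system of lower bounds on the $v_p(\lambda^{(i)})$. Finally I would apply the Witt-vector dictionary, essentially Lemma \ref{Pul_key_lemma}, to convert these bounds on ghost components into the equivalent conditions $|\lambda_i|_p<1$ for $0\le i\le n-1$, while confirming that $|\lambda_i|_p$ for $i\ge n$ plays no role (consistent with the vanishing of the higher ghost components of $[\omega_{u,n}]\lambda$). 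The main obstacle is the marginal case when some $|\lambda_j|_p=1$ with $j<n$: here one must isolate an explicit monomial in the power series expansion of $\exp(P(X))$ whose $p$-adic valuation is exactly borderline, thereby obstructing over-convergence. Since every valuation estimate in this argument depends only on $|up|_p=|p|_p$, Pulita's dominant-term analysis transfers verbatim, completing the proof.
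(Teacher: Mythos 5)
The paper does not actually reprove this statement: its ``proof'' is the one-line citation that this is the equivalence of conditions 2 and 3 of \cite[Theorem 2.2]{Pulita} (only the valuations $|\omega_{u,i}|_p$ enter, and these do not depend on $u\in\Z_p^\times$, so Pulita's argument transfers). Your attempt to supply a genuine proof is therefore more ambitious than the paper, but it has a real gap at its central step. Your reduction of $E([\omega_{u,n}]\lambda,X)$ to $\exp\bigl(\sum_{i=0}^{n-1}c_iX^{p^i}\bigr)$ with $c_i=\omega_{u,n-i}\lambda^{(i)}/p^i$ is fine and agrees with the formula displayed in the paper; the problem is the next assertion, that over-convergence of this exponential is \emph{equivalent} to $v_p(c_i)>\frac{1}{p-1}$ for each $i$. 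That criterion is false in both directions, and what it misses is precisely the content of the lemma. Dwork's series $\exp(\gamma X-\gamma X^p)$ with $v_p(\gamma)=\frac{1}{p-1}$ already defeats necessity; for sufficiency, take $n=2$ and $\lambda=(p^{1/p^2},0,0,\ldots)$, so that $|\lambda_0|_p,|\lambda_1|_p<1$ and the lemma promises over-convergence, yet $\lambda^{(1)}=p^{1/p}$ gives $c_1=\omega_{u,1}p^{\frac1p-1}$ with $v_p(c_1)=\frac{1}{p-1}+\frac1p-1<0$, so the single factor $\exp(c_1X^p)$ does not even converge on the closed unit disc. The over-convergence asserted by the lemma is created by cancellation \emph{between} the monomials $c_iX^{p^i}$, which a term-by-term valuation bound cannot detect.

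The workable mechanism (Pulita's) runs through the Witt components rather than the ghost components: writing $\mu=[\omega_{u,n}]\lambda=(\mu_0,\mu_1,\ldots)$, one has $E(\mu,X)=\prod_{i\ge0}E(\mu_iX^{p^i})$ with $E$ the Artin--Hasse exponential, each factor lying in $1+X^{p^i}\bo_L[[X]]$ and being over-convergent exactly when $|\mu_i|_p<1$. Note that the $\mu_i$ for $i\ge n$ need not vanish even though the ghost components $\mu^{(i)}$ do, so the product form does not ``collapse'' the way the exponential form does, and one must also control the decay of $|\mu_i|_p$ for large $i$ to get over-convergence of the infinite product. The hypotheses $|\lambda_0|_p,\ldots,|\lambda_{n-1}|_p<1$ are then converted into statements about the $\mu_i$ using the arithmetic of Witt multiplication and $|\omega_{u,n}|_p<1$, in the spirit of Lemma \ref{Pul_key_lemma}; the borderline case $|\lambda_j|_p=1$ for some $j<n$ is handled by exhibiting a component $\mu_r$ with $|\mu_r|_p=1$, not by isolating a dominant monomial of the ghost expansion. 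If you want a self-contained proof you need to carry out this Witt-component analysis; the ghost-component estimate you propose cannot be repaired.
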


\begin{proof} 
This is the equivalence of $2$ and $3$ in \cite[Theorem 2.2]{Pulita}.
\end{proof}

We now state our over-convergence result.
\begin{theorem}\label{thm:power-series}
The power series 
$$
E_{u,n}(X)=\exp
\left(
\sum_{i=0}^{n-1}\frac{\omega_{u,n-i}(X^{p^i}-uX^{p^{i+1}})}{p^i}
\right)$$ 
is over-convergent for all $u\in\Z_p^{\times}$ and $n\in\mathbb{N}$.
\end{theorem}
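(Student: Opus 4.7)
The strategy is to adapt Pulita's proof of \cite[Theorem 2.5]{Pulita} (the case $u=1$) to arbitrary $u\in\Z_p^\times$, by relating $E_{u,n}(X)$ to the Artin-Hasse exponential of a Witt vector to which Lemma \ref{Pul_thm2.2} applies. Throughout, let $L=\Q_p(\omega_{u,n})$.

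First I would identify the natural Witt vector behind the exponent. Since $[\omega_{u,n}]\in W(\bo_L)$ has ghost components $\omega_{u,n-i}$ for $0\le i\le n-1$ (and $0$ thereafter), one has
$$E([\omega_{u,n}],Y)=\exp\Bigl(\sum_{i=0}^{n-1}\frac{\omega_{u,n-i}Y^{p^i}}{p^i}\Bigr)\,.$$
Formally substituting $Y=X-uX^p$ produces a series whose exponent is close to, but not exactly, the exponent of $E_{u,n}(X)$. The discrepancy comes from the binomial expansion
$$(X-uX^p)^{p^i}=X^{p^i}+(-u)^{p^i}X^{p^{i+1}}+p\,N_i(X),\quad N_i(X)\in\Z_p[u][X],$$
in which the ``middle'' terms are divisible by $p$ because $\binom{p^i}{k}\in p\Z_p$ for $1\le k\le p^i-1$.

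Next, I would write $E_{u,n}(X)=E([\omega_{u,n}]\cdot\nu,X)\cdot\exp(p\,R(X))$ for an explicit Witt vector $\nu\in W(\bo_L)$ of the form $\nu=[h(\omega_{u,n})]$ with $h\in\Z_p[[X]]$ and $h(0)=0$, and some $R(X)\in\bo_L[[X]]$. The correction $\exp(pR(X))$ collects three contributions: the middle binomial terms $pN_i(X)$; the difference $(-u)^{p^i}-(-u)$, which lies in $p\Z_p$ by Fermat's little theorem and lifting; and any discrepancy between the simplest Lubin-Tate polynomial $Y^p+upY$ and the general $f_u(Y)$, which is again a multiple of $p$ in $\Z_p[Y]$ by definition. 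The Witt vector $\nu$ is chosen so that the ghost components of $[\omega_{u,n}]\cdot\nu$ match the leading part of the exponent of $E_{u,n}(X)$.

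Finally I would conclude as follows. The factor $\exp(pR(X))$ is over-convergent since the coefficients of $pR$ lie in $p\bo_L$, so the series converges on a disc of radius strictly larger than $1$. The factor $E([\omega_{u,n}]\cdot\nu,X)$ is over-convergent by Lemma \ref{Pul_thm2.2} applied to $\lambda=\nu$, where the hypothesis $|\nu_0|_p,\ldots,|\nu_{n-1}|_p<1$ follows from Lemma \ref{Pul_key_lemma} because $h(0)=0$. The main obstacle will be the precise identification of $h$ (hence of $\nu$) and the careful verification that the correction $pR(X)$ is integral with all three of the above contributions properly absorbed. This requires delicate manipulation of the Lubin-Tate relation $f_u(\omega_{u,j})=\omega_{u,j-1}$, the binomial identities above, and the ghost-to-Witt correspondence, but introduces no essentially new idea beyond the framework developed in \cite{Pulita}; the generality $u\in\Z_p^\times$ is accommodated precisely because Lemmas \ref{Pul_key_lemma} and \ref{Pul_thm2.2} are already stated for this setting.
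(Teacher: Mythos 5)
Your target decomposition --- $E_{u,n}(X)=E([\omega]\,[h(\omega)],X)\cdot(\text{over-convergent factor})$ with $h(0)=0$, followed by Lemmas \ref{Pul_key_lemma} and \ref{Pul_thm2.2} --- is the right one, and it is exactly how the paper concludes. But the route you propose for reaching it, substituting $Y=X-uX^p$ into $E([\omega_{u,n}],Y)$ and absorbing the binomial discrepancy into a factor $\exp(pR(X))$ with $R\in\bo_L[[X]]$, has a genuine gap: no such $R$ exists, and the correction that actually appears is not over-convergent. The middle binomial terms contribute $\frac{\omega_{u,n-i}}{p^i}\binom{p^i}{k}(-u)^kX^{p^i+k(p-1)}$ to the exponent, of valuation $v_p(\omega_{u,n-i})-v_p(k)$; this is \emph{negative} whenever $p\mid k$, so for every $i\ge 2$ the correction is not even integral (the same happens to the $\frac{\omega_{u,n-i}(u-u^{p^i})}{p^i}X^{p^{i+1}}$ terms). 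Even in the mildest case $n=2$, $i=1$, the correction is $\omega_{u,1}N_1(X)$ with $v_p(\omega_{u,1})=\frac{1}{p-1}$, and $\exp(\omega_{u,1}X^m)$ has radius of convergence exactly $1$: the coefficient of $X^{mp^a}$ is $\omega_{u,1}^{p^a}/(p^a)!$, of valuation $\frac{1}{p-1}$ for every $a$, so the factor you want to discard does not converge beyond (or even on) the unit circle. Moreover the factor being corrected, $E([\omega_{u,n}],X-uX^p)$, is itself not over-convergent (take $\lambda=(1,0,0,\ldots)$ in Lemma \ref{Pul_thm2.2}). Your proposed factorisation therefore splits an over-convergent series into two non-over-convergent pieces, which proves nothing.

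The paper avoids all of this by never substituting: it simply regroups the exponent by powers of $X$. Re-indexing the second sum by $j=i+1$, the coefficient of $X^{p^j}/p^j$ becomes $\omega_{u,n-j}-up\,\omega_{u,n-j+1}=\omega_{u,n-j+1}\,h(\omega_{u,n-j+1})$ with $h(Y)=f_u(Y)/Y-up$, so $h(0)=0$ by the normalisation $f_u(Y)\equiv upY \bmod Y^2$ --- note this handles a general Lubin--Tate $f_u$ uniformly, disposing of your third ``discrepancy'' with no extra work. These coefficients are precisely the ghost components of $[\omega_{u,n+1}][h(\omega_{u,n+1})]$. Observe the shift to level $n+1$ of the tower, which your set-up over $L=\Q_p(\omega_{u,n})$ cannot accommodate: the $j=0$ coefficient $\omega_{u,n}$ can never equal $\omega_{u,n}h(\omega_{u,n})$ with $h(0)=0$ and $|\omega_{u,n}|_p<1$. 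This is why the paper peels off the factor $\exp(up\,\omega_{u,n+1}X)$, which is genuinely over-convergent since $|up\,\omega_{u,n+1}|_p<p^{-1}$. The resulting identity $E_{u,n}(X)=\exp(up\,\omega_{u,n+1}X)\,E([\omega_{u,n+1}][h(\omega_{u,n+1})],X)$ is exact, with no residual correction, and Lemmas \ref{Pul_key_lemma} and \ref{Pul_thm2.2} then finish the argument exactly as you intended.
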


This is a generalisation of the equivalence of $3$ and $4$ in
\cite[Theorem 2.5]{Pulita} and is essentially proved in the same
way. Letting $u=1$ reduces to Pulita's setup (with a shift in the
numbering). 

\begin{proof}
Let $u\in\Z_p^{\times}$ and $n\in\mathbb N$, then
\begin{eqnarray*}
E_{u,n}(X)
&=&\exp\left(
\left(\sum_{i=0}^{n-1}\frac{\omega_{u,n-i}X^{p^{i}}}{p^i}\right)-
\left(\sum_{i=0}^{n-1}\frac{u\omega_{u,n-i}X^{p^{i+1}}}{p^i}\right)
\right)\\
&=&
\exp(up\omega_{u,n+1}X)
\exp\left(
\sum_{i=0}^{n}\omega_{u,n-i+1}
\left(\frac{\omega_{u,n-i}}{\omega_{u,n-i+1}}-up\right)
\frac{X^{p^i}}{p^i}
\right)\enspace.
\end{eqnarray*}
Letting ${h}(X)=f_u(X)/X-up$ we then have 
$$E_{u,n}(X)
=\exp(up\omega_{u,n+1}X)\, 
E([\omega_{u,n+1}][{h}(\omega_{n+1})],X)
\enspace.$$
We know that $\exp(up\omega_{u,n+1}X)$ is over-convergent, and so we
need only to check the over-convergence of
$E([\omega_{u,n+1}][{h}(\omega_{n+1})],X)$. 

The constant term in ${h}(X)$ is $up-up=0$ and
$|\omega_{u,n+1}|_p<1$. Therefore, if we let
$[{h}(\omega_{n+1})]=(\lambda_0,\lambda_1,\ldots)$, then from Lemma
\ref{Pul_key_lemma} we see that $|\lambda_r|_p<1$ for all
$r$. Over-convergence now follows from Lemma \ref{Pul_thm2.2}.
\end{proof}


\subsection{Differential modules with Frobenius structure}

We now explain briefly how these exponentials give a special structure
to certain $p$-adic differential modules. For a detailed account of
differential modules with Frobenius structure, see \cite[Ch. 5 and
  Ch. 17]{Kedlaya}. Recall, specifically, that a \textit{differential
  ring} is a commutative ring $R$ equipped with a derivation
$d:R\rightarrow R$, namely an additive map satisfying
$d(ab)=ad(b)+bd(a)$ for $a,b\in R$. 

\begin{definition}\label{def:diff-mod}
A differential module over a differential ring $(R,d)$ is an
$R$-module $M$ equipped with an additive map $D:M\rightarrow M$
satisfying, for any $a\in R$ and $m\in M$, 
$$D(am)=aD(m)+d(a)m\enspace.$$
Such a $D$ is called a differential operator on $M$ relative to $d$.
\end{definition}
Note that morphisms of $(R,d)$-differential modules are morphisms of
$R$-modules that commute with the differential operators. 

Recall that $L$ is some finite extension of $\Q_p$. 
\begin{definition} 
We define $\mathcal R_L$, the Robba ring with coefficients in $L$, to
be the ring of bidirectional power series over $L$ which converge on
some annulus of outer radius $1$, namely 
$$\mathcal{R}_L=\left\{\sum_{n=-\infty}^{\infty}a_nX^n
\ \left|\
\begin{array}{l}
a_n\in L \text{ for every }n\in\Z,\\ 
 \lim_{+\infty}|a_n|_p\,\alpha^n=0 \text{\ for some $0<\alpha<1$,}\\
\lim_{-\infty}|a_n|_p\,\rho^n=0 \text{\ for all $0<\rho<1$.}
\end{array}\right.
\right\}\enspace.$$
\end{definition}
We equip $\mathcal{R}_L$ with the derivation $\dd_X=X\frac{d}{dX}$. 

Let $M$ be a free rank one $\R_L$-module. In view of Definition
\ref{def:diff-mod}, any differential operator $D$ on $M$ is defined by
giving the image $D(\e)$ of a basis $\e$ of $M$, namely by specifying
$g\in{\mathcal R}_L$ such that $D(\e)=g\e\,$. 
\begin{definition}
Let $h\in\mathcal{R}_L$. A free rank one $\R_L$-differential module
$(N,\Delta)$ is said to be defined by $\dd_X-h$ if there exists a basis
$\varepsilon$ of $N$ such that $\Delta(\varepsilon)=h\varepsilon$.
\end{definition}

\begin{remark}\label{basis_change_remark}
Suppose that $(M,D)$ is defined by $\dd_X-g$, with basis $\e$. If
$f\in\mathcal{R}_L^\times$, $\e'=f\e$ is also a basis of $M$, and
$$D(\e')=fD(\e)+\dd_X(f)\e=\left(g+\frac{\dd_Xf}{f}\right)\e'\enspace,$$
hence $(M,D)$ is also defined by $\dd_X-(g+\frac{\dd_Xf}{f})$, with
basis $\e'$. 
\end{remark}
On the other hand, one easily checks that if $(M,D)$ is also defined by
$\dd_X-h$ for some $h\in{\mathcal R}_L$, then there exists $f\in{\mathcal
R}_L^\times$  such that $h=g+\frac{\dd_Xf}{f}$.
\begin{proposition}
Let $(M,D)$ be defined by $\dd_X-g$ then, for any free rank one
$\R_L$-differential module $(N,\Delta)$, $(N,\Delta)$ is isomorphic to
$(M,D)$ if and only if $(N,\Delta)$ is defined by $\dd_X-g$.
\end{proposition}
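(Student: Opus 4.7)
The plan is to unpack the definitions and verify both implications directly, with no real technical input needed beyond the Leibniz rule and the freeness (of rank one) of the modules involved.

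For the ``if'' direction, suppose $(N,\Delta)$ is defined by $\dd_X - g$, so that there exists a basis $\varepsilon$ of $N$ with $\Delta(\varepsilon) = g\varepsilon$. Since both $M$ and $N$ are free $\mathcal{R}_L$-modules of rank one, the unique $\mathcal{R}_L$-linear map $\phi : M \to N$ sending $\e \mapsto \varepsilon$ is an $\mathcal{R}_L$-module isomorphism. I would then check that $\phi$ intertwines the differential operators: for any $a \in \mathcal{R}_L$, using $D(\e)=g\e$ and the Leibniz rule,
$$\phi\bigl(D(a\e)\bigr) = \phi\bigl(a g \e + \dd_X(a)\e\bigr) = ag\varepsilon + \dd_X(a)\varepsilon = a\Delta(\varepsilon) + \dd_X(a)\varepsilon = \Delta(a\varepsilon) = \Delta\bigl(\phi(a\e)\bigr),$$
so $\phi$ is an isomorphism of differential modules.

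For the ``only if'' direction, suppose $\phi : (M,D) \to (N,\Delta)$ is an isomorphism of differential modules, and set $\varepsilon = \phi(\e)$. As the image of a basis under a module isomorphism, $\varepsilon$ is a basis of $N$. Then, using that $\phi$ commutes with the differential operators,
$$\Delta(\varepsilon) = \Delta\bigl(\phi(\e)\bigr) = \phi\bigl(D(\e)\bigr) = \phi(g\e) = g\phi(\e) = g\varepsilon,$$
so $(N,\Delta)$ is defined by $\dd_X - g$ (with respect to the basis $\varepsilon$).

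There is no genuine obstacle: the proposition is essentially a packaging statement, recording that the isomorphism class of a rank one $\mathcal{R}_L$-differential module is detected by the existence of a basis on which the operator acts by the specified element $g\in\mathcal{R}_L$, which fits naturally with Remark \ref{basis_change_remark} describing how the defining element changes under a change of basis by $f\in\mathcal{R}_L^\times$.
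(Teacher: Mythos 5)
Your proof is correct and follows essentially the same route as the paper's: in both directions you transport the basis $\e$ along the $\mathcal{R}_L$-module isomorphism determined by $\e\mapsto\varepsilon$ and verify compatibility with the differential operators via the Leibniz rule. No gaps.
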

\begin{proof}
Suppose $(N,\Delta)$ is defined by $\dd_X-g$, and let $\varepsilon$
(resp. $\e$) be a basis of $N$ (resp. $M$) such that
$\Delta(\varepsilon)=g\varepsilon$
(resp. $D(\e)=g\e$). Let $\varphi$ be the ${\mathcal
  R}_L$-morphism $M\rightarrow N$ defined by
$\varphi(\e)=\varepsilon$, then $\varphi$ is an  isomorphism
of ${\mathcal R}_L$-modules and, for all $a\in{\mathcal R}_L$, 
$$\varphi(D(a\e))
=ag\varepsilon+\dd_X(a)\varepsilon
=a\Delta(\varepsilon)+\dd_X(a)\varepsilon
=D'(\varphi(a\e))
\enspace,$$
namely $\varphi$ is an isomorphism of ${\mathcal R}_L$-differential
modules.

Suppose now that an isomorphism of ${\mathcal R}_L$-differential
modules $\varphi:(M,D)\rightarrow(N,\Delta)$ is given. Let again
$\e$ be a basis of $M$ such that $D(\e)=g\e$ and set
$\varepsilon=\varphi(\e)$, then $\varepsilon$ is a basis of $N$ and  
$$\Delta(\varepsilon)
=\Delta(\varphi(\e))
=\varphi(D(\e))
=g\varepsilon
\enspace,$$
as required.
\end{proof}

We will refer to any $\mathcal R_L$-differential module defined by
$\dd_X-g$ as the $\mathcal R_L$-differential module defined by
$\dd_X-g$. This is a slight abuse of terminology, but is consistent
with the literature. 

We now recall some facts described in \cite[\S1.2.3-4]{Pulita}. Note
that Pulita defines $\mathcal{R}_L$-differential modules in a
different (but equivalent) way. Namely, he defines them as modules over
the non-commutative ring $\mathcal R_L[\dd_X]$  of differential
polynomials with coefficients in $\mathcal R_L$ (and the rule
$\dd_X\,a=a\dd_X+\dd_X(a)$). 

Let $\varphi:L\rightarrow L$ be a continuous $\Q_p$-automorphism
lifting the Frobenius automorphism of $k_L/\F_p$. This extends to a
continuous ring endomorphism  
$$\begin{array}{cccc}
\phi:&\R_L&\longrightarrow&\R_L\\
&\sum a_iX^i&\longmapsto&\sum\varphi(a_i)X^{pi}
\end{array}$$ 
known as an \textit{absolute Frobenius} on $\R_L$. There exists a
functor $\phi^*$ from the isomorphism classes of rank one
$\R_L$-differential modules to themselves, which sends the
$\mathcal{R}_L$-differential module defined by $\dd_X-g(X)$ to the
$\mathcal{R}_L$-differential module defined by  
$$\dd_X-\frac{X}{\phi(X)}\frac{d\phi}{dX}\phi(g(X))
=\dd_X-p\phi(g(X))\enspace.$$

\begin{definition}
A rank one $\mathcal{R}_L$-differential module $M$ has Frobenius
structure (of order $1$) if $M\cong\phi^{*}(M)$. 
\end{definition} 

\begin{theorem}
Let $u\in\mu_{p-1}\subset \Z_p^{\times}$, $f_u(X)=X^p+upX$, and let
$\{\omega_{u,n}\}_{n\ge 0}$ be a coherent set of roots associated to
$f_u(X)$. Then, for all $n\in\mathbb N$, the  
$\R_{\Q_p(\omega_{u,n})}$-differential module defined by 
$$\dd_X-\sum_{i=0}^{n-1}\omega_{u,n-i}X^{p^{i}}$$ 
has Frobenius structure.
\end{theorem}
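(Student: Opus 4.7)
The plan is to produce an explicit $f\in\R_L^\times$, with $L=\Q_p(\omega_{u,n})$, such that $p\phi(g(X))=g(X)+\dd_X(f)/f$, where $g(X)=\sum_{i=0}^{n-1}\omega_{u,n-i}X^{p^i}$ is the coefficient appearing in the given differential operator. By Remark \ref{basis_change_remark}, such an $f$ will show that $M$ is also defined by $\dd_X-p\phi(g)$; the Proposition preceding this theorem will then deliver the required isomorphism $M\cong\phi^*M$. The natural candidate is $f=E_{u,n}(X)^{-1}$, which lies in $\R_L^\times$ because $E_{u,n}(X)$ is over-convergent by Theorem \ref{thm:power-series} and has constant term $1$.

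To justify this choice I will apply $\dd_X=X\frac{d}{dX}$ directly to the exponent of $E_{u,n}(X)$; a short computation will give
$$\dd_X\log E_{u,n}(X)=g(X)-up\sum_{i=0}^{n-1}\omega_{u,n-i}X^{p^{i+1}}\enspace.$$
The remaining task will be to arrange that the rightmost sum equals $\phi(g(X))/u$, which, after unpacking the definition of $\phi$, amounts to producing a continuous $\Q_p$-automorphism $\varphi$ of $L$ that lifts the Frobenius of $k_L/\F_p$ and satisfies $\varphi(\omega_{u,i})=u\omega_{u,i}$ for every $i\le n$. The residue-field condition on $\varphi$ is free of charge, since $L/\Q_p$ is totally ramified.

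The crux of the proof, and the only place where the hypothesis $u\in\mu_{p-1}$ will be essential, is constructing such a $\varphi$. I plan to obtain it from the Lubin-Tate formal group $F_u$ associated to $f_u$: its endomorphism $[u](X)\in\Z_p[[X]]$ is uniquely characterised by $[u](X)\equiv uX\pmod{X^2}$ and $[u]\circ f_u=f_u\circ[u]$, and a one-line check will show that the linear polynomial $uX$ satisfies the second identity precisely when $u^p=u$, i.e., when $u\in\mu_{p-1}$. So under our hypothesis $[u](X)=uX$. Lubin-Tate reciprocity will then furnish an element $\varphi\in\Gal(L/\Q_p)$ acting by $[u]$ on $\omega_{u,n}$; because $[u]$ commutes with $f_u$ and the $\omega_{u,i}$ form a coherent sequence, $\varphi$ will act as $\varphi(\omega_{u,i})=[u](\omega_{u,i})=u\omega_{u,i}$ on the whole tower. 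This will close the argument: the resulting $\phi$ will satisfy $\phi(g(X))=u\sum_{i=0}^{n-1}\omega_{u,n-i}X^{p^{i+1}}$, hence $\dd_X\log E_{u,n}(X)=g(X)-p\phi(g(X))$, and therefore $f=E_{u,n}(X)^{-1}$ will satisfy $\dd_X(f)/f=p\phi(g)-g$ as required.
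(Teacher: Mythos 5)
Your proposal is correct and follows essentially the same route as the paper: both arguments gauge-transform the module by $E_{u,n}(X)^{-1}=E_{u,n}(-X)$ (legitimate by the over-convergence theorem) and both hinge on the identity $f_u(uX)=uf_u(X)$, valid exactly when $u^p=u$, to produce an automorphism $\varphi$ with $\varphi(\omega_{u,i})=u\omega_{u,i}$ and hence $\phi(g(X))=ug(X^p)$. The only cosmetic difference is that you extract $\varphi$ from the Lubin--Tate endomorphism $[u](X)=uX$ via reciprocity, whereas the paper checks directly that $u\omega_{u,n}$ is a root of the minimal polynomial $f_u^n/f_u^{n-1}$ of $\omega_{u,n}$.
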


\begin{proof}  
For $k\in\mathbb N$, we denote by $f_u^k(X)$ the composition of $k$
copies of $f_u(X)$. From standard Lubin-Tate theory we know that
$\Q_p(\omega_{u,n})/\Q_p$ is Galois and of degree $p^{n-1}(p-1)$ and
that $\omega_{u,n}$ has minimum polynomial $f_u^n/f_u^{n-1}$ (this
polynomial has the same degree as the extension and is Eisenstein over
$\Q_p$). 

For our choices of $u$ and $f_u(X)$, we see that $f_u(uX)=uf_u(X)$,
and so ${f_u^n}/{f_u^{n-1}(uX)}={f_u^n}/{f_u^{n-1}(X)}$. This means
that $u\omega_{u,n}$ is also a root of $f_u^n/f_u^{n-1}$, and
$\varphi:\omega_{u,n}\mapsto u\omega_{u,n}$ defines an element of
$\Gal(\Q_p(\omega_{u,n})/\Q_p)$, which is therefore a $\Q_p$-linear 
continuous automorphism of $\Q_p(\omega_{u,n})$. From these
definitions, we also have $\varphi(\omega_{u,m})=u\omega_{u,m}$ for 
all $m\le n$. As $\Q_p(\omega_{u,n})/\Q_p$ is totally ramified, the
residue field extension is trivial and any $\Q_p$-automorphism of
$\Q_p(\omega_{u,n})$ lifts the Frobenius automorphism of
$k_{\Q_p(\omega_{u,n})}/\F_p$.

Let $\phi$ be the absolute Frobenius on
$\mathcal{R}_{\Q_p(\omega_{u,n})}$ coming from $\varphi$ as described
above. Let 
$$g(X)=\sum_{i=0}^{n-1}\omega_{u,n-i}X^{p^i}\enspace$$ 
and note that we have $\phi(g(X))=ug(X^{p})$.

We note that $E_{u,n}(X)=E_{u,n}(-X)^{-1}$, and so the 
over-convergence of $E_{u,n}(X)$, proved in Theorem
\ref{thm:power-series}, implies that $E_{u,n}(X)$ and $E_{u,n}(-X)$
are contained in $\mathcal{R}_{\Q_p(\omega_{u,n})}$; in particular, 
$$E_{u,n}(-X)=\exp\left(
\sum_{i=0}^{n-1}\frac{\omega_{u,n-i}(uX^{p^{i+1}}-X^{p^i})}{p^i}
\right)
\in\mathcal{R}_{\Q_p(\omega_{u,n})}^{\times}\enspace.$$
Let $M$ be the $\mathcal{R}_{\Q_p(\omega_{u,n})}$-differential module
defined by $\dd_X-g(X)$ with basis vector \e. From Remark
\ref{basis_change_remark} we see that $M$ is also defined by
$\dd_X-upg(X^p)$ with basis vector $E_{u,n}(-X)\e$. Finally, we
observe that as $\phi(g(X))=ug(X^{p})$, the differential module
$\phi^*(M)$ is defined by $\dd_X-upg(X^p)$, and so
$M\cong\phi^{*}(M)$. 
\end{proof}

\begin{remark}
A differential module with Frobenius structure is necessarily solvable
in the sense of \cite[Definition 1.7]{Pulita} (see \cite[Remark
  1.5]{Pulita}); in \cite[Theorem 3.1]{Pulita} Pulita completely
describes the isomorphism classes of rank one solvable
$R_{\Q_p(\omega_{u,n})}$-differential modules. Therefore, despite that
fact that the differential modules above are different to those
explicitly described by Pulita, they must each be contained in one of
Pulita's isomorphism classes. 
\end{remark}

\bibliography{bib}
\end{document}